\newtheorem{thm}{Theorem}[section]
\newtheorem{lemma}[thm]{Lemma}
\theoremstyle{definition}
\def\Z{\mathbb Z}
\def\N{\mathbb N}
\def\R{\mathbb R}
\def\pmod #1{\ ({\rm mod}\ #1)}
\def\le{\leqslant}
\def\ge{\geqslant}
\numberwithin{equation}{section}
\begin{document}


\baselineskip=17pt



\title[Squares and Powers of 2]{Four squares of primes and powers of 2}

\author[L. Zhao]{Lilu  Zhao}
\address{School of Mathematics, Hefei University of Technology, Heifei 230009, People's Republic of China}
\email{zhaolilu@gmail.com}

\date{}

\begin{abstract}
By developing the method of Wooley on the quadratic
Waring-Goldbach problem, we prove that all sufficiently large even
integers can be expressed as a sum of four squares of primes and
$46$ powers of $2$.
\end{abstract}

\subjclass[2010]{Primary 11P32; Secondary 11P55, 11N36}

\keywords{circle method, sieve method, quadratic Waring-Goldbach
problem}

\maketitle

\section{Introduction}
In 1950's, it was shown by Linnik \cite{Lin1,Lin2} that every
sufficiently large integer can be represented as the sum of two
primes and $K$ powers of two, where $K$ is an absolute number. In
1975, Gallagher \cite{G} obtained a stronger result via a
different approach. An explicit value for the number $K$ was
firstly obtained by Liu, Liu and Wang \cite{LLW1}, who established
that $K=54000$ is acceptable. The value for the number $K$ was
subsequently improved by Li \cite{Li00}, Wang \cite{W1999} and Li
\cite{Li01}. Recently, a rather different method was described by
Heath-Brown and Puchta \cite{HP}, and independently by Pintz and
Ruzsa \cite{PR}. In particular, it was shown in \cite{HP} that
$K=13$ is acceptable, and it was claimed in \cite{PR} that $K=8$
is acceptable.

In 1938, Hua \cite{H} proved that all large integers congruent to
$5$ modulo $24$ can be represented as the sum of five squares of
primes. It seems reasonable to conjecture that every large integer
congruent to $4$ modulo $24$ can be expressed as the sum of four
squares of primes. This problem is still open while Br\"{u}dern
and Fouvry \cite{BF} established that every sufficiently large
integer $n\equiv 4\pmod {24}$ is the sum of four squares of almost
primes.

In 1999, Liu, Liu and Zhan \cite{LLZ} investigated the expression
\begin{equation}\label{e0} N=p_1^2+p_2^2+p_3^2+p_4^2+2^{\nu_1}+\cdots+2^{\nu_k},\end{equation}
and proved that every sufficiently large even integer can be
represented as the sum of four squares of primes and $k$ powers of
two. It was shown in \cite{LiuLiu} that $k=8330$ is acceptable.
This value was sharpened to $k=165$ in \cite{LiuLv} and $k=151$ in
\cite{Li1}. The purpose of this paper is to establish the
following result.
\begin{thm}\label{thm}
 Every sufficiently large even integer can be represented as a
sum of four squares of primes and 46 powers of $2$.
\end{thm}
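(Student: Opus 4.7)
The plan is to apply the Hardy--Littlewood circle method, with a new minor arc treatment inspired by Wooley's work on the quadratic Waring--Goldbach problem. Set $L=\lfloor\log N/\log 2\rfloor$, $P=\sqrt{N}$, and define the exponential sums
\[
f(\alpha)=\sum_{p\le P}(\log p)\,e(\alpha p^2), \qquad g(\alpha)=\sum_{\nu=1}^{L} e(\alpha 2^\nu).
\]
The weighted count of representations of $N$ in the form \eqref{e0} with $k=46$ is
\[
R(N)=\int_0^1 f(\alpha)^4 g(\alpha)^{46}\, e(-N\alpha)\,d\alpha.
\]
First I would decompose $[0,1)$ into major arcs $\mathfrak{M}$, consisting of neighbourhoods of Farey fractions $a/q$ with $q$ below a well-chosen cutoff, and minor arcs $\mathfrak{m}=[0,1)\setminus\mathfrak{M}$.

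On $\mathfrak{M}$, the established analysis of Liu--Liu--Zhan \cite{LLZ}, via the standard machinery of Gauss sums and local densities, yields the expected lower bound
\[
\int_{\mathfrak{M}} f^4 g^{46}\, e(-N\alpha)\,d\alpha \;\gg\; N\,L^{46}\,(\log N)^{-4},
\]
the singular series being positive for every sufficiently large even $N$. It then suffices to show that the minor arc contribution is of strictly smaller order. The strategy is to play two sharp inputs against each other. First, a Wooley-style treatment of $f$ on $\mathfrak{m}$, providing both a pointwise bound $\sup_{\alpha\in\mathfrak{m}}|f(\alpha)|\ll N^{1/2}(\log N)^{-A}$ and an improved higher-moment estimate for $f$, possibly supplemented by sieve ideas in the spirit of Br\"udern--Fouvry \cite{BF}. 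Second, a Pintz--Ruzsa-type bound on the distribution function
\[
Z(V)=\operatorname{meas}\{\alpha\in[0,1):|g(\alpha)|\ge V\},
\]
of the form $Z(N^{\lambda})\ll N^{-F(\lambda)}$ for an explicit $F$ such as those obtained in \cite{HP,PR}.

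I would then dyadically decompose $\mathfrak{m}$ according to the size of $|g(\alpha)|$; on each dyadic piece $\{|g|\sim V\}$, an application of H\"older's inequality yields a bound of the shape
\[
\int_{\{|g|\sim V\}\cap \mathfrak{m}} |f|^4|g|^{46}\,d\alpha \;\ll\; V^{46}\, Z(V/2)^{\theta}\, M(f)^{1-\theta}
\]
for a suitable H\"older exponent $\theta\in(0,1)$ and moment $M(f)$ of $f$. The principal obstacle, and the reason the reduction from $k=151$ to $k=46$ is genuinely hard, is to arrange the exponents so that every dyadic level $V$ yields a power saving over the major arc main term. This demands simultaneously the sharpest available measure estimates for the large values of $g$, and a substantial strengthening, via Wooley's quadratic exponential-sum technology, of the minor arc treatment of $f$ beyond what was available to previous authors; it is in the careful balancing of these two savings against the combinatorial weight $V^{46}$ that the value $k=46$ is expected to emerge.
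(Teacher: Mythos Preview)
Your outline misses the mechanism that actually drives the reduction to $46$. The paper does not proceed via a dyadic Pintz--Ruzsa decomposition in $|g|$ combined with generic moment bounds for $f$; the decisive object is instead the mean value
\[
I=\int_0^1 |T(\alpha)^4 G(\alpha)^{14}|\,d\alpha,
\]
and the new idea is to bound $I$ by a \emph{linear} sieve. One relaxes exactly one of the four prime variables to an integer $y$ with $(y,\Pi_z)=1$, inserts Rosser's upper-bound weights $\lambda^+(d)$, and is led to $\int T^2(\alpha)T(-\alpha)F(-\alpha)|G(\alpha)|^{14}\,d\alpha$ with $F(\alpha)=\sum_{d\mid\Pi_z}\lambda^+(d)\sum_x w(dx/\sqrt N)e(d^2x^2\alpha)$. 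After a further Hardy--Littlewood dissection, one needs asymptotics for the equation $p_1^2+p_2^2-p_3^2-(dx)^2+\sum_{j=1}^7(2^{u_j}-2^{v_j})=0$, which are available (Heath-Brown--Tolev, Tolev) precisely because the powers of $2$ have been kept inside the equation. The point is that a linear sieve loses only a factor of roughly $2e^\gamma/\log z$, whereas the four-dimensional Br\"udern--Fouvry machinery you invoke loses essentially the fourth power of that; this is exactly what pulls the admissible $k$ down from $151$ to $46$. Your plan, deferring to Br\"udern--Fouvry and treating the powers of $2$ only through measure estimates, would at best reproduce the older constants.

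There are also two concrete slips. First, with the $\log p$ weights you put on $f$, the major-arc main term is of order $NL^{46}$, not $NL^{46}(\log N)^{-4}$; this matters because the endgame is a comparison of two explicit constants multiplying $NL^{46}$. Second, your measure bound $Z(N^{\lambda})\ll N^{-F(\lambda)}$ is vacuous: since $|g(\alpha)|\le L\ll\log N$, one has $Z(N^{\lambda})=0$ for every fixed $\lambda>0$ once $N$ is large. The relevant input (used in the paper only at a single threshold, not dyadically) is $\mathrm{meas}\{\alpha:|G(\alpha)|\ge \lambda L\}\ll N^{-E(\lambda)}$ for a specific $\lambda<1$; the paper takes $\lambda=0.887167$ with exponent exceeding $3/4$, pulls out $(\lambda L)^{k-14}$ from $|G|^{k-14}$, and then feeds in the sieve bound on $I$ together with explicit numerical evaluations of $\sum_{h\ne0}r_7(h)\mathbf S(h)$ and of the averaged singular series to close the inequality at $k=44$ (two further powers of $2$ being spent to reduce to $N\equiv 4\pmod 8$).
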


We establish Theorem \ref{thm} by means of the Hardy-Littlewood
method in combination with the linear sieve. In order to bound the
contributions of the minor arcs, in previous works
\cite{LiuLv,Li1}, one may encounter the integral of the type
$\int_0^1|T(\alpha)G(\alpha)|^4d\alpha$, where $T(\alpha)$ and
$G(\alpha)$ are defined in \eqref{pro}. The above integral is no
more than the number of solutions for $p_1^2+p_2^2-p_3^2-p_4^2=h$
with $h=2^{\nu_1}+2^{\nu_2}-2^{\nu_3}-2^{\nu_4}$, where $p_j^2\le
N$ and $\nu_i\le L$. The contribution from $h=0$ can be obtained
by Rieger's result \cite{R}. Then as was pointed out in
\cite{LiuLv}, a crucial step is to bound from above the number of
solutions of the equation $p_1^2+p_2^2-p_3^2-p_4^2=h$ with nonzero
$h$. The machinery of Br\"{u}dern and Fouvry was employed directly
to provide such estimation, while the information on the powers of
two was lost in this process. Our approach is different from
theirs. Instead of the integral
$\int_0^1|T(\alpha)G(\alpha)|^4d\alpha$, we investigate a new
integral $\int_0^1|T(\alpha)^4G(\alpha)^{14}|d\alpha$. Now the
loss is that we need more variables for the powers of $2$ in the
mean value integral, while the gain is a situation where we can
apply a linear sieve procedure to the equation involving four
squares of primes and fourteen powers of two. This approach is
motivated by the works of Wooley \cite{Wooley} and of Tolev
\cite{Tolev}. In view of \cite{BF,HT,Wooley}, it seems very hard
to solve the equation $p_1^2+p_2^2-p_3^2-x^2=h$ for nonzero $h$,
while Wooloy's argument works well to establish the asymptotic
formula for the number of solutions for the equation
$p_1^2+p_2^2-p_3^2-x^2+\sum_{j=1}^3(2^{u_j}-2^{v_j})=0$ in a
suitable box, where $x$ is a natural number. Motivated by Wooley's
result, Tolev considered the exceptional set for the equation
$p_1^2+p_2^2+p_3^2+x^2=n$ with $x$ an almost prime, and his
argument works for the equation
$p_1^2+p_2^2-p_3^2-(dx)^2+\sum_{j=1}^{t}(2^{\nu_j}-2^{\mu_j})=0$
with a suitable $t$. The linear sieve was employed in place of the
four dimensional vector sieve, consequently the quantity is
comparable to one fourth of those results in \cite{LiuLv,Li1}.

\section{Preliminary results}

The letter $\varepsilon$ denotes an arbitrary small positive
constant. The letter $N$ is a large integer and $L=(\log(N/\log
N))/\log 2$. To apply the circle method, we set
$$P=N^{\frac{1}{5}-\varepsilon},\ \ \ \ \ Q=L^{-14}N/P.$$
Define
\begin{equation}\label{e1}\mathcal{M}=\bigcup_{1\le q\le P}\bigcup_{\substack{1\le a\le q\\
(a,q)=1}}\mathcal{M}(q,a), \textrm{ and }
C(\mathcal{M})=[\frac{1}{Q}, 1+\frac{1}{Q}]\setminus
\mathcal{M},\end{equation} where
$$\mathcal{M}(q,a)=\{\alpha:|\alpha-\frac{a}{q}|\le
\frac{1}{qQ}\}.$$ Denote by $\mathcal{B}$ the interval
$[\sqrt{(1/4-\eta)N},\sqrt{(1/4+\eta)N}\,]$, where $\eta\in
(0,\frac{1}{10^{10}})$ is a constant.
Let\begin{align}\label{pro}T(\alpha)=\sum_{p\in \mathcal{B}}(\log
p)e(p^2\alpha),\ \ G(\alpha)=\sum_{4\le \nu\le
L}e(2^\nu\alpha).\end{align} Then we have
\begin{align*}R_k(N):=&\sum_{\substack{p^2_1+p_2^2+p_3^2+p_4^2+2^{\nu_1}+\cdots+2^{\nu_k}=N\,
\\ p_j\in \mathcal{B}(1\le j\le 4),
\ 4\le \nu_1,\ldots,\nu_k\le L}}\prod_{j=1}^{4}\log p_j
\\=&\int^1_0T^4(\alpha)G^k(\alpha)e(-\alpha N)d \alpha
\\=&\int_{\mathcal{M}}T^4(\alpha)G^k(\alpha)e(-\alpha N)d \alpha+\int_{C(\mathcal{M})
}T^4(\alpha)G^k(\alpha)e(-\alpha N)d \alpha.\end{align*}Let
\begin{equation}\label{e2}
C^\ast(q,a)=\sum_{\substack{m=1\\ (m,q)=1}}^{q}e(\frac{am^2}{q}),
\ \
B(n,q)=\sum_{\substack{a=1\\
(a,q)=1}}^{q}C^{\ast}(q,a)^4e(-\frac{an}{q}),
\end{equation}and
\begin{equation}\label{e3}
A(n,q)=\frac{B(n,q)}{\phi^4(q)},\ \
\mathfrak{S}(n)=\sum_{q=1}^\infty A(n,q).
\end{equation} For $n\equiv 4\pmod {24}$, we have
\begin{equation}\label{e4}
1\ll \mathfrak{S}(n) \ll (\log\log n)^{11}
\end{equation}and
\begin{equation}\label{e5}
\mathfrak{S}(n)=24\prod_{p>3}(1+A(n,p)).
\end{equation}
Define\begin{align*}
\mathfrak{I}(h)=\int_{-\infty}^\infty\big(\int_{\sqrt{1/4-\eta}}^{{\sqrt{1/4+\eta}}}e(x^2\beta)dx\big)^4e(-h\beta)d\beta.\end{align*}

On the major arcs, we quote
\begin{lemma}[Lemma 2.1\cite{LiuLv}]
\label{l1} For $2\le n\le N$, we have
$$\int_{\mathcal{M}}T^4(\alpha)e(-\alpha n)d \alpha=\mathfrak{S}(n)\mathfrak{I}(\frac{n}{N})N+O(\frac{N}{\log N}),$$
where $\mathfrak{S}(n)$ is given by (\ref{e3}).
\end{lemma}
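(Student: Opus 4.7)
The plan is to carry out the standard Hardy--Littlewood major arc analysis on $T(\alpha)^4$. On an individual arc $\mathcal{M}(q,a)$ with $q\le P$ and $(a,q)=1$, I write $\alpha=a/q+\beta$ with $|\beta|\le 1/(qQ)$ and sort the primes $p\in\mathcal{B}$ by residue class modulo $q$; since every $p\in\mathcal{B}$ satisfies $p>N^{1/2}/3>P\ge q$, the coprimality $(p,q)=1$ is automatic, giving
\begin{equation*}
T(a/q+\beta)=\sum_{\substack{1\le m\le q\\ (m,q)=1}}e(am^2/q)\,\psi(q,m;\beta),
\end{equation*}
where $\psi(q,m;\beta)=\sum_{p\in\mathcal{B},\,p\equiv m\pmod q}(\log p)\,e(p^2\beta)$.

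The next step is to replace $\psi(q,m;\beta)$ by $V(\beta)/\phi(q)$, where $V(\beta)=\int_{\sqrt{(1/4-\eta)N}}^{\sqrt{(1/4+\eta)N}}e(x^2\beta)\,dx$. For $q\le(\log N)^A$ the Siegel--Walfisz theorem, followed by partial summation, provides an error of the shape $O\bigl(N^{1/2}\exp(-c\sqrt{\log N})(1+|\beta|N)\bigr)$. For $(\log N)^A<q\le P$, where Siegel--Walfisz is not applicable, one invokes a Vaughan-type decomposition of the prime sum, followed by Weyl-type estimates for the resulting quadratic exponential sums. Combining the two regimes produces the pointwise approximation
\begin{equation*}
T(\alpha)=\frac{C^*(q,a)}{\phi(q)}V(\beta)+E(q,a;\beta),\qquad \alpha=a/q+\beta\in\mathcal{M}(q,a).
\end{equation*}

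Raising this approximation to the fourth power, expanding binomially, and integrating over $\mathcal{M}$ extracts the main term
\begin{equation*}
\sum_{q\le P}\frac{B(n,q)}{\phi(q)^4}\int_{|\beta|\le 1/(qQ)}V(\beta)^4\,e(-n\beta)\,d\beta,
\end{equation*}
together with cross terms of the form $\int_{\mathcal{M}}E(q,a;\beta)^{j}M(\alpha)^{4-j}\,d\alpha$ for $1\le j\le 4$, where $M(\alpha)=(C^*(q,a)/\phi(q))V(\beta)$. These cross terms are controlled by combining the pointwise bound $|M(\alpha)|\ll N^{1/2}$, Hua's inequality $\int_{\mathcal{M}}|T|^4\,d\alpha\ll N(\log N)^{C}$, and the measure estimate $\mathrm{meas}(\mathcal{M})\ll P^2/Q$. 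The main term is finalized by completing $\sum_{q\le P}$ to $\sum_{q=1}^\infty$, producing the singular series $\mathfrak{S}(n)$ (the tail controlled by the standard bound $|B(n,q)|\ll q^{2+\varepsilon}(n,q)^{1/2}$), and by extending the $\beta$-integral to all of $\R$ using $|V(\beta)|\ll N^{1/2}(1+N|\beta|)^{-1/2}$; the change of variable $x=\sqrt{N}y$, $\beta=\gamma/N$ identifies the resulting expression with $N\,\mathfrak{I}(n/N)$.

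The main obstacle is the mid-range $q\in\bigl((\log N)^A,P\bigr]$, where Siegel--Walfisz fails and one must rely on Vaughan's identity combined with sharp estimates for quadratic Weyl-type sums over arithmetic progressions; it is precisely this step that forces the admissible size $P=N^{1/5-\varepsilon}$. Everything else amounts to routine binomial bookkeeping and well-known manipulations of the singular series and singular integral.
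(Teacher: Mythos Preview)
Your overall architecture --- split each arc as $\alpha=a/q+\beta$, approximate $T$ by $C^\ast(q,a)V(\beta)/\phi(q)$, raise to the fourth power, complete the singular series and singular integral --- is the right shape, and the Siegel--Walfisz step for $q\le(\log N)^A$ is fine. The paper itself gives no proof here; it simply cites Liu--L\"u \cite{LiuLv} and remarks that the slightly different summation range for $T$ makes no difference. So the comparison is with the argument in \cite{LiuLv} and the surrounding literature on enlarged major arcs in the quadratic Waring--Goldbach problem.

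The genuine gap is your treatment of the range $(\log N)^A<q\le P=N^{1/5-\varepsilon}$. Vaughan's identity combined with Weyl-type estimates yields an \emph{upper bound} for $|T(\alpha)|$, not an asymptotic; it tells you nothing about the approximation $\psi(q,m;\beta)\approx V(\beta)/\phi(q)$, which is a statement about equidistribution of primes in progressions to modulus $q$ far beyond Siegel--Walfisz. Bounding $|E|\le|T|+|M|$ separately and integrating is also far too crude: for $q$ near $(\log N)^A$ the pointwise Weyl saving on $T$ is only a power of $\log N$, while the measure of those arcs is $\gg N^{-4/5}$, so the contribution blows up to size $N^{6/5+o(1)}$ rather than $O(N/\log N)$. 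Vaughan/Weyl is a minor-arc device; it is not what drives the choice $P=N^{1/5-\varepsilon}$ on the major arcs.

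What actually carries this range in \cite{LiuLv} (and in the Liu--Zhan line of work it rests on) is multiplicative-character/$L$-function machinery: expand $T(a/q+\beta)$ via Dirichlet characters modulo $q$, separate the principal-character contribution (which produces the main term), and control the non-principal part on average over $q\le P$ and $a$ using the explicit formula together with zero-density estimates and the large sieve for Dirichlet $L$-functions. It is this mean-square control of the character sums --- not any pointwise exponential-sum bound --- that allows $P$ as large as $N^{1/5-\varepsilon}$ while keeping the total major-arc error $O(N/\log N)$. Your proposal would be salvageable if you replaced the Vaughan/Weyl paragraph by this $L$-function argument; everything downstream (binomial cross terms, completion of $\mathfrak S$ and $\mathfrak I$) is then routine as you describe.
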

The definition of $T(\alpha)$ in \eqref{pro} is slightly different
from that in \cite{LiuLv}, while the above result can be proved by
the same argument.

\section{An Application of the linear sieve}

Let
\begin{align}\label{defI}I=\int_0^{1}|T(\alpha)^4G(\alpha)^{14}|d\alpha.\end{align}
The purpose of this section is to obtain an upper bound for $I$ by
using the linear sieve. We first give an auxiliary lemma.
\begin{lemma}\label{lemmaJ}Let $$J=\sum_{
\substack{x_1^2+x_2^2=x_3^2+x_4^2
\\ 1\le x_1,x_2,x_3,x_4\le
P}}\tau(x_1)\tau(x_2)\tau(x_3)\tau(x_4),$$ where $\tau(n)$ denotes
the divisor function. Then we have
\begin{align}J\ll P^2(\log
P)^{14}.\end{align} \end{lemma}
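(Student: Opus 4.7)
Proof proposal.

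I would begin by linearising the equation $x_1^2+x_2^2=x_3^2+x_4^2$ through the factorisation
\[(x_1-x_3)(x_1+x_3)=(x_4-x_2)(x_4+x_2).\]
Setting $a=x_1+x_3$, $b=x_1-x_3$, $c=x_4+x_2$, $d=x_4-x_2$, each tuple $(x_1,x_2,x_3,x_4)$ corresponds to a quadruple $(a,b,c,d)$ with $ab=cd$, $1\le a,c\le 2P$, $|b|<a$, $|d|<c$ and $a\equiv b$, $c\equiv d\pmod 2$. The inversion $x_1=(a+b)/2$, $x_3=(a-b)/2$, $x_2=(c-d)/2$, $x_4=(c+d)/2$ recasts the divisor weight accordingly. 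In particular $4x_1x_3=a^2-b^2$ and $4x_2x_4=c^2-d^2$.

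To bound the transformed weights, I would use the elementary inequality $\tau(k/2)\le\tau(k)$ (for even $k$) together with the well-known estimate $\tau(M)\tau(N)\le\tau(MN)\,\tau(\gcd(M,N))$. Since $\gcd(a+b,a-b)$ divides $2\gcd(a,b)$, this produces $\tau(x_1)\tau(x_3)\ll\tau(a^2-b^2)\,\tau(\gcd(a,b))$, and analogously for $\tau(x_2)\tau(x_4)$. Hence
\[J\ll\sum_{\substack{ab=cd\\ a,c\le 2P}}\tau(a^2-b^2)\,\tau(c^2-d^2)\,\tau(\gcd(a,b))\,\tau(\gcd(c,d)).\]

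The next step is to apply the standard parametrisation of $ab=cd$: writing $r=\gcd(a,c)$, $a=rs$, $c=ru$ with $\gcd(s,u)=1$, there is a unique $t\ge 1$ with $b=ut$ and $d=st$. This turns the sum into a fourfold sum over $(r,s,u,t)\in[1,2P]^4$ (with $\gcd(s,u)=1$), weighted by divisor functions of $(rs)^2-(ut)^2$, $(ru)^2-(st)^2$ and of gcds like $\gcd(r,u)\gcd(r,t)\gcd(s,t)$. Using the baseline bound $\sum_{n\le 4P^2}\tau(n)^2\ll P^2(\log P)^3$ for the count of solutions of $ab=cd$, together with the classical estimates $\sum_{n\le X}\tau(n)^k\ll X(\log X)^{2^k-1}$ and Shiu-type bounds for divisor functions over polynomial values or arithmetic progressions, and chaining these via H\"older's inequality, I would obtain the required bound $J\ll P^2(\log P)^{14}$.

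The main obstacle is to achieve the sharp exponent $14$ rather than something like $15$ or $18$, which a careless chain of Cauchy--Schwarz applications would produce. The log-accounting must exploit the constraint $ab=cd$ when redistributing the weight $\tau(a^2-b^2)=\tau((a-b)(a+b))$ between factors, and must treat the diagonal case ($b=d=0$, i.e.\ $x_1=x_3$, $x_2=x_4$) separately: this contribution is at once bounded by $\bigl(\sum_{x\le P}\tau(x)^2\bigr)^2\ll P^2(\log P)^6$, well within the target. The bulk of the work therefore lies in the generic off-diagonal quadruples, where careful bookkeeping of the gcd terms in the parametrisation is the decisive step.
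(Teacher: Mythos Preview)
Your proposal outlines a plausible strategy but does not actually close the gap you yourself identify: you assert that ``chaining these via H\"older's inequality'' yields the exponent $14$, and then immediately concede that ``the main obstacle is to achieve the sharp exponent $14$ rather than something like $15$ or $18$.'' The parametrisation of $ab=cd$ introduces four new variables $r,s,u,t$ and several extra $\tau$-weights (on $a^2-b^2$, $c^2-d^2$, and assorted gcd's), and you give no indication of which H\"older exponents to use or how the log-powers combine to give $14$. As written, the argument is a sketch whose decisive step is left open.

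The paper's argument is shorter and sidesteps the $ab=cd$ parametrisation entirely. After separating the diagonal $x_1=x_3$ (bounded by $(\sum_{x\le P}\tau(x)^2)^2\ll P^2(\log P)^6$, as you note), one applies Cauchy--Schwarz together with the symmetry $x_1\leftrightarrow x_2$, $x_3\leftrightarrow x_4$ of the equation to strip the weights from $x_2,x_4$:
\[
J_o\ \le\ \sum_{\substack{x_1^2+x_2^2=x_3^2+x_4^2\\ x_1\ne x_3}}\tau(x_1)^2\tau(x_3)^2.
\]
For fixed $x_1<x_3$ the number of pairs $(x_2,x_4)$ with $(x_2-x_4)(x_2+x_4)=x_3^2-x_1^2$ is at most $\tau(x_3^2-x_1^2)$, so
\[
J_o\ \le\ 2\sum_{x_1<x_3\le P}\tau(x_1)^2\tau(x_3)^2\,\tau(x_3^2-x_1^2).
\]
A single H\"older with exponents $(3/2,3)$ separates the two factors; the first is bounded by $\bigl(\sum_{x\le P}\tau(x)^3\bigr)^2\ll P^2(\log P)^{14}$, and the second, after writing $x_3^2-x_1^2=(x_3-x_1)(x_3+x_1)$ and using $\tau(mn)\le\tau(m)\tau(n)$, by $\sum_{a,b\le 2P}\tau(a)^3\tau(b)^3\ll P^2(\log P)^{14}$. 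The exponent $14$ falls out directly. The key simplification you missed is to remove the weights on $x_2,x_4$ at the outset by symmetry, rather than carrying all four $\tau$-factors through the factorisation.
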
 \begin{proof} One
has\begin{eqnarray*}J&=&\sum_{\substack{x_1^2+x_2^2=x_3^2+x_4^2\\x_1\not=x_3}}\tau(x_1)\tau(x_2)\tau(x_3)\tau(x_4)+\big(\sum_{x_1}\tau^2(x_1)
\big)^2 \\ &=:& J_{o}+J_{d}.
\end{eqnarray*}
The diagonal contribution $J_{d}$ is bounded by $P^2(\log P)^6$.
It suffices to prove $J_{o}\ll P^2(\log P)^{14}$. We have
\begin{eqnarray*}J_{o}&\le&\sum_{\substack{x_1^2+x_2^2=x_3^2+x_4^2\\x_1\not=x_3}}\tau^2(x_1)\tau^2(x_3)
\\
&=&2\sum_{x_1<x_3}\tau^2(x_1)\tau^2(x_3)\sum_{\substack{x_2,x_4\\(x_2-x_4)(x_2+x_4)=x_3^2-x_1^2}}1
\\ &\le &2\sum_{x_1<x_3}\tau^2(x_1)\tau^2(x_3)\tau(x_3^2-x_1^2)
\\ &\le &
2\big(\sum_{x_1<x_3}\tau^3(x_1)\tau^3(x_3)\big)^{2/3}\big(\sum_{x_1<x_3}\tau^3(x_3^2-x_1^2)\big)^{1/3}.\end{eqnarray*}
Note that $\sum_{1\le x_1<x_3 \le P}\tau^3(x_3^2-x_1^2)\le
\sum_{1\le a,b\le 2P}\tau^3(a)\tau^3(b)$. The desired result
follows from above easily.
\end{proof}
Let
\begin{align*}g(\beta)\ =\ \int_{\sqrt{1/4-\eta}}^{{\sqrt{1/4+\eta}}}e(x^2\beta)dx\ \textrm{ and
}\ g^{+}(\beta)\ =\
\int_{\sqrt{1/4-\eta-\eta^2}}^{{\sqrt{1/4+\eta+\eta^2}}}e(x^2\beta)dx.\end{align*}
Note that
\begin{align*}g(\beta),\ g^{+}(\beta)\ \ll \min\{1, |\beta|^{-1}\}.\end{align*}
 We introduce two integrals
\begin{align*}
\mathfrak{J}^{+}(h)=\int_{-\infty}^\infty g(\beta)^2
g(-\beta)g^{+}(-\beta) e(-h\beta)d\beta\end{align*} and
\begin{align*}
\mathfrak{J}(h)=\int_{-\infty}^\infty|g(\beta)|^4e(-h\beta)d\beta.\end{align*}
Note that $\mathfrak{J}^{+}(h)$ and $\mathfrak{J}(h)$ are
nonnegative constants depending on $\eta$. Moreover, one has
$\mathfrak{I}(1)\le \mathfrak{J}(0)\le \mathfrak{J}^{+}(0)\le
(1+O(\eta))\mathfrak{I}(1)$, where the $O$-constant is absolute.
Let \begin{align}\label{defSB} \mathbf{S}(h)\ =\
\prod_{p>2}\big(1+\frac{\mathbf{B}(p,h)}{(p-1)^4}\big),\end{align}
 where  \begin{align*}\mathbf{B}(p,h)\ =\
\sum_{\substack{a=1\\
(a,q)=1}}^{q}|C^\ast(p,a)|^4e(ah/p).\end{align*}
\begin{lemma}\label{lemma41}
Let $I$ be defined by (\ref{defI}). Then we have
\begin{align*}I\ \le \ 8(16+\varepsilon)\mathfrak{J}^{+}(0)N\sum_{h\not=0}r_7(h)&\mathbf{S}(h) \ +\
O(NL^{13}),\end{align*}where
\begin{align*}r_t(h)=\sum_{\substack{ 4\le \nu_j,\mu_j \le L
\\ \sum_{j=1}^{t}(2^{\nu_j}-2^{\mu_j})=h}}1,\ \ t\in \N.\end{align*}
\end{lemma}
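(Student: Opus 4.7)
The plan is to reinterpret $I$ as a weighted count of solutions to a Diophantine equation, absorb the diagonal $h=0$ piece into the stated error, and estimate the off-diagonal contribution by the Rosser--Iwaniec upper bound linear sieve combined with the Hardy--Littlewood method.

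First, using $|T(\alpha)^{4}G(\alpha)^{14}|=T(\alpha)^{2}\overline{T(\alpha)}^{2}G(\alpha)^{7}\overline{G(\alpha)}^{7}$, expansion and orthogonality give
$$I\,=\,\sum_{h\in\Z}r_{7}(h)\,R(h),\quad R(h):=\sum_{\substack{p_{1},\ldots,p_{4}\in\mathcal{B}\\p_{1}^{2}+p_{2}^{2}-p_{3}^{2}-p_{4}^{2}=h}}\prod_{j=1}^{4}\log p_{j},$$
after using the symmetry $r_{7}(-h)=r_{7}(h)$. The diagonal term $h=0$ contributes $r_{7}(0)R(0)$, which I plan to absorb into the $O(NL^{13})$ error: $r_{7}(0)=\int_{0}^{1}|G(\alpha)|^{14}d\alpha$ is controlled by a standard high-moment bound on $G$, while $R(0)$ (the fourth moment of $T$) is handled via the Rieger-type estimate on pairs of sums of two squared primes alluded to in the introduction.

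For each $h\ne 0$, I next bound $R(h)$ by applying the Rosser--Iwaniec upper bound linear sieve to all four prime variables. Fix a small $\delta>0$, take $z=N^{\delta}$ and sieve level $D=z^{2-\varepsilon}$, and let $\{\lambda_{d}^{+}\}$ be the Rosser--Iwaniec weights (squarefree $d\mid P(z)$, $d\le D$, $|\lambda_{d}^{+}|\le 1$) satisfying $\mathbf{1}_{(n,P(z))=1}\le\sum_{d\mid n}\lambda_{d}^{+}$. Since every $p_{i}\in\mathcal{B}$ exceeds $z$, one has $\mathbf{1}_{p_{i}\text{ prime}}\le\sum_{d_{i}\mid p_{i}}\lambda_{d_{i}}^{+}$, and $\log p_{i}\le\tfrac{1}{2}\log N\,(1+O(\eta))$. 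Hence
$$R(h)\le(1+O(\eta))\bigl(\tfrac{1}{2}\log N\bigr)^{4}\sum_{\mathbf{d}}\prod_{i=1}^{4}\lambda_{d_{i}}^{+}\,N(h;\mathbf{d}),$$
where $N(h;\mathbf{d})$ counts $(x_{1},\ldots,x_{4})$ with $x_{i}\in\mathcal{B}$, $d_{i}\mid x_{i}$, and $x_{1}^{2}+x_{2}^{2}-x_{3}^{2}-x_{4}^{2}=h$. Each $N(h;\mathbf{d})$ is then evaluated by the circle method applied to the exponential sums $\sum_{x\in\mathcal{B},\,d\mid x}e(x^{2}\alpha)$. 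The major-arc analysis extracts a main term of the shape $(d_{1}d_{2}d_{3}d_{4})^{-1}A(h;\mathbf{d})\mathfrak{J}(h)N$ for a multiplicative local density $A$, and the bounds $\mathfrak{J}(h)\le\mathfrak{J}(0)\le\mathfrak{J}^{+}(0)$ let one replace $\mathfrak{J}(h)$ by $\mathfrak{J}^{+}(0)$ uniformly, the slight enlargement from $g$ to $g^{+}$ absorbing the smoothing loss inherent in the sieve. Reassembling $\sum_{\mathbf{d}}\prod\lambda_{d_{i}}^{+}(d_{1}\cdots d_{4})^{-1}A(h;\mathbf{d})$ by the fundamental lemma of the linear sieve reconstructs the singular series $\mathbf{S}(h)$, multiplied by the sieve inefficiency $F(s)^{4}\le(2+\varepsilon)^{4}=16+\varepsilon'$ (one factor of $2$ per sifted prime) together with the additional combinatorial factor of $8$ arising from the bookkeeping of the four-fold sieve decomposition.

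The principal technical obstacle will be proving the $N(h;\mathbf{d})$ asymptotic uniformly in $h$ and $\mathbf{d}$ with an error term that survives summation against the sieve weights as $d_{1}\cdots d_{4}$ approaches $D^{4}$. This forces a delicate choice of $P$, $Q$, and $D$ so that the pooled minor-arc contribution remains inside $O(NL^{13})$, and requires a precise identification of the local densities emerging from the sieve with the specific factors $1+\mathbf{B}(p,h)/(p-1)^{4}$ defining $\mathbf{S}(h)$, rather than with any strictly larger local majorant that would spoil the explicit constant $16+\varepsilon$.
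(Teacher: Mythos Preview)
Your architecture diverges from the paper's at the crucial sieve step, and this divergence breaks the constant. The paper does \emph{not} sieve all four prime variables: it drops $\log p_4$ at the cost of a single factor $\log\sqrt{N}$, replaces $p_4$ by a smoothly weighted integer $y$, and applies the Rosser linear sieve to $y$ alone, with level $D=N^{1/16-\varepsilon}$ and $z=D^{1/2}$. The three surviving prime sums $T(\alpha)$ are kept intact; this is precisely what permits the minor-arc bound via H\"older, Rieger's estimate $\int_0^1|T|^4\,d\alpha\ll N\log^2 N$, and the Heath-Brown--Tolev input for the single sieved sum $F(\alpha)$. The constant $8\cdot 16$ then falls out transparently: the $16$ is $(\log\sqrt{N})/\log z$ from the sieve level $z=N^{1/32-\varepsilon}$, while the $8=2\cdot 4$ arises from the $2$-adic local factor of $\mathcal{S}(h)$ (which equals $4$) and the factor $(1-\tfrac12)^{-1}=2$ in passing from $\prod_{2<p<z}(1-1/p)$ to Mertens' $\prod_{2\le p<z}(1-1/p)\sim e^{-\gamma}/\log z$; the $e^{-\gamma}$ cancels against $\Phi(2)=e^{\gamma}$.

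Your four-variable sieve is essentially the Br\"udern--Fouvry vector-sieve route the paper explicitly avoids; as the introduction notes, sieving a single variable yields a constant ``comparable to one fourth'' of that approach. Concretely, your numerology is wrong: the Rosser upper-bound function at $s=2$ is $F(2)=e^{\gamma}\approx 1.781$, not $2$, so $F(2)^4\approx 10.06$, not $16+\varepsilon$; and there is no ``combinatorial factor of $8$'' inherent in a four-fold sieve decomposition---that number has been reverse-engineered to match the target. Moreover, with four sieved variables the admissible level per variable must shrink to keep the pooled error controlled, inflating the constant further. The outcome would not meet the numerical inequality in Section~5. Finally, the assertion that the four-fold local densities reassemble exactly into $\mathbf{S}(h)=\prod_{p>2}\bigl(1+\mathbf{B}(p,h)/(p-1)^4\bigr)$ with $\mathbf{B}$ built from $|C^\ast|^4$ hides a nontrivial identity; in the paper this is the displayed relation linking $\Omega(p)$, $\mathcal{A}_1(p,h)$ and $\mathbf{B}(p,h)$, where one genuine-integer variable and three genuine primes merge, and it has no direct analogue in your setup.
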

\begin{proof}
Note that
\begin{align}I=\sum_{h\in \Z}r_7(h)\sum_{\substack{ p_j\in
\mathcal{B}
\\ p_1^2+p_2^2-p_3^2-p_4^2=h}}\prod_{j=1}^4\log p_j,\end{align}
Let us introduce a smooth function $w: \R^{+}\rightarrow [0,1]$,
which is supported on the interval
$[\sqrt{(1/4-\eta-\eta^2)},\sqrt{(1/4+\eta+\eta^2)}\,]$ and
satisfies $w(x)=1$ for all $x\in
[\sqrt{(1/4-\eta)},\sqrt{(1/4+\eta)}\,]$. It is clear that
\begin{align}\label{boundI}I\le I_{w}\log \sqrt{N},\end{align} where
\begin{align}I_{w}=\sum_{h\in\Z}r_7(h)\sum_{\substack{ p_1,p_2,p_3\in
\mathcal{B}
\\ p_1^2+p_2^2-p_3^2-p_4^2=h}}w(p_4/\sqrt{N})\prod_{j=1}^3\log p_j.\end{align}
Consider Rosser's weight $\lambda^{+}(d)$ of order
$D=N^{1/16-\varepsilon}$. Let $z=D^{1/2}$ and
$\Pi_{z}=\prod_{2<p<z}p$. Recalling the properties of Rosser's
weights, we know $|\lambda^{+}(d)|\le 1$,
$\sum_{d|(n,\Pi_{z})}\mu(d)\le
\sum_{d|(n,\Pi_{z})}\lambda^{+}(d)$, and
$$\lambda^{+}(d)=0\ \textrm{ if }\ \mu(d)=0\ \textrm{ or }\ d>D .$$ We
have
\begin{eqnarray*}I_{w} &\le &\sum_{h\in\Z}r_7(h)\sum_{\substack{ p_1,p_2,p_3\in
\mathcal{B},\ (y,\Pi_{z})=1
\\ p_1^2+p_2^2-p_3^2-y^2=h}}w(y/\sqrt{N})\prod_{j=1}^3\log p_j
\\&\le &\sum_{h\in\Z}r_7(h)\sum_{\substack{ p_1,p_2,p_3\in
\mathcal{B}
\\ p_1^2+p_2^2-p_3^2-y^2=h}}w(y/\sqrt{N})\big(\sum_{d|(y,\Pi_{z})}\lambda^{+}(d)\big)(\prod_{j=1}^3\log p_j)
\\&= &\sum_{d|\Pi_{z}}\lambda^{+}(d)
\sum_{h\in\Z}r_7(h)\sum_{\substack{ p_1,p_2,p_3\in \mathcal{B}
\\ p_1^2+p_2^2-p_3^2-d^2x^2=h}}w(dx/\sqrt{N})(\prod_{j=1}^3\log p_j)
\\ &:=&I_{w}^{+}.\end{eqnarray*}
Define
\begin{align*}f_d(\alpha)=\sum_{x}w(dx/\sqrt{N})e(d^2x^2\alpha),
\end{align*} and\begin{align*}
F(\alpha)=\sum_{d|\Pi_{z}}\lambda^{+}(d)f_d(\alpha).\end{align*}
Now $I_{w}^{+}$ can be represented as
\begin{align}I_{w}^{+}=\int_0^1T^2(\alpha)T(-\alpha)F(-\alpha)|G(\alpha)|^{14}d\alpha.\end{align}

Let \begin{equation}\label{Msmall}\mathfrak{M}=\bigcup_{1\le q\le N^{\eta}}\bigcup_{\substack{1\le a\le q\\
(a,q)=1}}\mathfrak{M}(q,a),\end{equation}where
$\mathfrak{M}(q,a)=\{\alpha: |q\alpha-a|\le N^{\eta}N^{-1}\}$.
Then we define
\begin{equation}\label{msmall}\mathfrak{m}=[N^{\eta}/N,1+N^{\eta}/N]\setminus\mathfrak{M}.\end{equation} So we
have
\begin{eqnarray}\label{Iplus}I_{w}^{+}=&&\sum_{h\not=0}r_7(h)
\int_{\mathfrak{M}}T^2(\alpha)T(-\alpha)F(-\alpha)e(-h\alpha)d\alpha\notag
\\
&+&r_7(0)\int_{\mathfrak{M}}T^2(\alpha)T(-\alpha)F(-\alpha)d\alpha
\\ &+&\int_{\mathfrak{m}}T^2(\alpha)T(-\alpha)F(-\alpha)|G(\alpha)|^{14}d\alpha.\notag\end{eqnarray}
We first consider the third integral on the right hand side of
(\ref{Iplus}). By Holder's inequality,
\begin{align*}&\int_{\mathfrak{m}}T^2(\alpha)T(-\alpha)F(-\alpha)|G(\alpha)|^{14}d\alpha
\\ \le &
\big(\int_0^1|T(\alpha)|^4d\alpha\big)^{3/4}\big(\int_{\mathfrak{m}}|F(\alpha)^4G(\alpha)^{56}|d\alpha\big)^{1/4}.\end{align*}
In light of Rieger's result \cite{R},
$\int_0^1|T(\alpha)|^4d\alpha \ll N\log^2 N$. Note that
\begin{align*}\int_{\mathfrak{m}}|F(\alpha)^4G(\alpha)^{56}|d\alpha
=&\sum_{h}r_{28}(h)\int_{\mathfrak{m}}|F(\alpha)^4|e(h\alpha)d\alpha
\\=&\sum_{h\not=0}r_{28}(h)\int_{\mathfrak{m}}|F(\alpha)^4|e(h\alpha)d\alpha+r_{28}(0)
\int_{\mathfrak{m}}|F(\alpha)^4|d\alpha.\end{align*} In view of
the work of Heath-Brown and Tolev \cite{HT} (see also
\cite{Tolev}), one has for $h\not=0$ that
\begin{eqnarray*}\int_{\mathfrak{m}}|F(\alpha)^4|e(h\alpha)d\alpha& \ll & N^{1-\delta},\end{eqnarray*}
where $\delta>0$ is a small constant depending on $\eta$.
Considering the underlying Diophantine equation, we have
\begin{eqnarray*}\int_{\mathfrak{m}}|F(\alpha)^4|d\alpha\le &\int_{0}^{1}|F(\alpha)^4|d\alpha &\le  J,\end{eqnarray*}
where $J$ is given by Lemma \ref{lemmaJ}. Hence
$\int_{\mathfrak{m}}|F(\alpha)^4|d\alpha \ll  NL^{14}$  and
\begin{eqnarray*}\int_{\mathfrak{m}}|F(\alpha)^4G(\alpha)^{56}|d\alpha
&\ll&NL^{42}.\end{eqnarray*}We conclude from above that
\begin{eqnarray*}\int_{\mathfrak{m}}T^2(\alpha)T(-\alpha)F(-\alpha)|G(\alpha)|^{14}d\alpha
&\ll&NL^{12}.\end{eqnarray*}The second integral in \eqref{Iplus}
can be handled similarly (actually easier). In particular, we have
\begin{eqnarray*}r_7(0)\int_{\mathfrak{M}}T^2(\alpha)T(-\alpha)F(-\alpha)d\alpha
&\ll&NL^{12}.\end{eqnarray*} Now we turn to the first integral in
\eqref{Iplus}, which is equal to
\begin{eqnarray*}\sum_{d|\Pi_{z}}\lambda^{+}(d)\int_{\mathfrak{M}}T^2(\alpha)T(-\alpha)f_d(-\alpha)e(-h\alpha)d\alpha.\end{eqnarray*}
Let us introduce
\begin{eqnarray*}\mathcal{S}_d(h)=\sum_{q=1}^{\infty}\frac{\mathcal{A}_d(q,h)}{q\phi^3(q)}
,\ \,\ \mathcal{S}(h)=\mathcal{S}_1(h),\end{eqnarray*} where
\begin{align*}\mathcal{A}_d(q,h)=\sum_{\substack{a=1\\
(a,q)=1}}^{q}C^\ast(q,a)^2C^\ast(q,-a)C(q,-ad^2)e(-ah/q)
\end{align*}
and
\begin{align*}C(q,a)=
\sum_{\substack{x=1}}^{q}e\big(\frac{ax^2}{q}\big).
\end{align*} Define $\Omega(d)=\frac{\mathcal{S}_d(h)}{\mathcal{S}(h)}$
provided that $\mathcal{S}(h)$ is nonzero, and $\Omega(d)=1$ if
$\mathcal{S}(h)$ is zero. Let \begin{align*}\ g^{+}_{w}(\beta)\ =\
\int_{\sqrt{1/4-\eta-\eta^2}}^{{\sqrt{1/4+\eta+\eta^2}}}w(x)e(x^2\beta)dx\end{align*}
and
\begin{align*}
\mathfrak{J}^{+}(h)\ =\ \int_{-\infty}^\infty g(\beta)^2
g(-\beta)g^{+}_{w}(-\beta) e(-h\beta)d\beta.\end{align*}

 The standard argument in the Waring-Goldbach problem implies
the asymptotic formula
\begin{eqnarray*}\int_{\mathfrak{M}}T^2(\alpha)T(-\alpha)f_d(-\alpha)e(-h\alpha)d\alpha
&=&\frac{\Omega(d)}{d}\mathcal{S}(h)\mathfrak{J}^{+}_{w}(h/N)N
\\ &&\ \ +O(d^{-1}N\log^{-A}N).\end{eqnarray*} In view of the properties
of Rosser's weights, we
have\begin{align*}\sum_{d|\Pi_{z}}&\lambda^{+}(d)\int_{\mathfrak{M}}T^2(\alpha)T(-\alpha)f_d(-\alpha)e(-h\alpha)d\alpha
\\ &\le\ \big(\Phi(2)+\varepsilon\big)\prod_{2<p<z}\big(1-\frac{\Omega(p)}{p}\big)\mathcal{S}(h)\mathfrak{J}^{+}_{w}(h/N)N+O(N\log^{-A}N),\end{align*}
where $\Phi(s)=2e^{\gamma}/s$ for $0<s\le 3$, and $\gamma$ is
Euler's constant. Note that $\Omega(2)=0$ when $h$ is even.
 Therefore we finally obtain
\begin{align*}I_{w}^{+}\ \le \ \sum_{h\not=0}r_7(h)&\big(\Phi(2)+\varepsilon\big)
\prod_{2<p<z}\big(1-\frac{\Omega(p)}{p}\big)\mathcal{S}(h)\mathfrak{J}^{+}_{w}(h/N)N
\ +\ O(NL^{12}).\end{align*} For $p>2$, one has
\begin{align*}(1-\frac{\Omega(p)}{p})(1-\frac{1}{p})^{-1}(1+\frac{\mathcal{A}_1(p,h)}{p(p-1)^3})\ =
\ 1+\frac{\mathbf{B}(p,h)}{(p-1)^4}.\end{align*}One also has
\begin{align*}1+\sum_{k=1}^\infty \frac{\sum_{a(2^k)^\ast}C^\ast(2^k,a)^2C^\ast(2^k,-a)C(2^k,-a)}{2^k\phi^3(2^k)}=4.\end{align*}
It is well-known that \begin{align*}\prod_{2\le
p<z}\big(1-\frac1p\big)=\frac{e^{-\gamma}}{\log
z}\Big(1+O(\frac{1}{\log z})\Big).\end{align*}
 Now we conclude that
\begin{align*}I_{w}^{+}\ \le& \
8(16+\varepsilon)(\log\sqrt{N})^{-1}\sum_{h\not=0}r_7(h)\mathbf{S}(h)\mathfrak{J}^{+}_{w}(h/N)N\
+\ O(NL^{12})
\\ \le& \
8(16+\varepsilon)\mathfrak{J}^{+}(0)N(\log\sqrt{N})^{-1}\sum_{h\not=0}r_7(h)\mathbf{S}(h)\
+\ O(NL^{12}).\end{align*} The desired conclusion now follows from
(\ref{boundI}) easily.
\end{proof}
\begin{lemma}\label{lemma33}
One has
\begin{align*}\int_{C(\mathcal{M})}|T(\alpha)^4G(\alpha)^{14}|d\alpha\ \le &\ 8(15+\varepsilon)
(1+O(\eta))\mathfrak{J}(0)N\sum_{h\not=0}r_7(h)\mathbf{S}(h) \\
&\ \  + O(NL^{13}).\end{align*}
\end{lemma}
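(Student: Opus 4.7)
The strategy is to write
\[
\int_{C(\mathcal{M})}|T(\alpha)^4 G(\alpha)^{14}|\,d\alpha \;=\; I \;-\; \int_{\mathcal{M}}|T(\alpha)^4 G(\alpha)^{14}|\,d\alpha,
\]
bound $I$ from above via Lemma \ref{lemma41} (using $\mathfrak{J}^{+}(0)\le (1+O(\eta))\mathfrak{J}(0)$), and bound the major-arc integral from below by a direct Hardy--Littlewood analysis. The shift of the constant from $8(16+\varepsilon)$ to $8(15+\varepsilon)$ will come from the single unit subtracted off by the major-arc main term.

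For the major-arc lower bound, expand
\[
|T(\alpha)|^4 |G(\alpha)|^{14} \;=\; T(\alpha)^2 \overline{T(\alpha)}^{\,2}\,G(\alpha)^7 \overline{G(\alpha)}^{\,7} \;=\; \sum_{h\in\Z} r_7(h)\,|T(\alpha)|^4\, e(h\alpha),
\]
so that $\int_{\mathcal{M}}|T|^4|G|^{14}d\alpha = \sum_{h} r_7(h) \int_{\mathcal{M}} |T|^4 e(h\alpha)\,d\alpha$. Drop the non-negative $h=0$ term to pass to a lower bound. For each $h\ne 0$, the standard major-arc analysis, entirely parallel to the proof of Lemma \ref{l1} but applied to $|T|^4=T^2\bar T^2$ in place of $T^4$, yields
\[
\int_{\mathcal{M}}|T|^4 e(h\alpha)\,d\alpha \;=\; N\,\mathfrak{J}(h/N)\sum_{q=1}^{\infty}\frac{\mathbf{B}(q,h)}{\phi^{4}(q)} \;+\; O\!\big(N\log^{-A}N\big)
\]
for any fixed $A>0$. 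The singular series factors as an Euler product. For odd $p$, the classical identity $C^{\ast}(p^k,a)=0$ whenever $k\ge 2$ and $(a,p)=1$ collapses the local factor to $1+\mathbf{B}(p,h)/(p-1)^4$. For $p=2$, the analogous fact $C^{\ast}(2^k,a)=0$ when $k\ge 4$ and $a$ is odd (immediate from the equidistribution of odd squares modulo $2^k$) truncates the local factor to
\[
1+\mathbf{B}(2,h)+\frac{\mathbf{B}(4,h)}{16}+\frac{\mathbf{B}(8,h)}{256}.
\]
Since every $h=\sum_{j=1}^{7}(2^{\nu_j}-2^{\mu_j})$ with $\nu_j,\mu_j\ge 4$ is divisible by $16$, a short direct computation evaluates this to $1+1+2+4=8$. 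Hence $\sum_q \mathbf{B}(q,h)/\phi^{4}(q)=8\,\mathbf{S}(h)$.

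Because $|h|\le 14\cdot 2^L\le 14N/\log N$, continuity of $\mathfrak{J}$ at the origin forces $\mathfrak{J}(h/N)=(1+O(\eta))\mathfrak{J}(0)$ uniformly over such $h$ once $N$ is sufficiently large. Taking $A$ large enough to absorb the $\sum_h r_7(h)\ll L^{14}$ instances of the error $O(N\log^{-A}N)$ into $O(NL^{13})$ produces
\[
\int_{\mathcal{M}}|T^4G^{14}|\,d\alpha \;\ge\; 8(1-O(\eta))\mathfrak{J}(0)\,N\sum_{h\neq 0}r_7(h)\mathbf{S}(h) \;-\; O(NL^{13}),
\]
and subtracting from the upper bound furnished by Lemma \ref{lemma41} yields the stated inequality. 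The principal obstacle is the careful derivation and uniform control of the major-arc asymptotic for $|T|^4$, especially the evaluation of the $p=2$ local factor, together with the verification that $\mathbf{S}(h)\ge 0$ so that the pointwise estimate $\mathfrak{J}(h/N)\ge (1-O(\eta))\mathfrak{J}(0)$ transfers to the $r_7(h)\mathbf{S}(h)$-weighted sum.
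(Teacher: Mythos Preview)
Your proposal is correct and follows essentially the same route as the paper: express the minor-arc contribution as $I$ minus a major-arc integral, bound $I$ from above via Lemma~\ref{lemma41}, and evaluate the major-arc integral for $|T|^4$ asymptotically as $8\mathbf{S}(h)\mathfrak{J}(h/N)N$ plus acceptable error, so that the subtraction converts $16+\varepsilon$ into $15+\varepsilon$. The only cosmetic differences are that the paper passes through the auxiliary dissection $\mathfrak{M},\mathfrak{m}$ (using $C(\mathcal{M})\subseteq\mathfrak{m}$) rather than $\mathcal{M}$ directly, and records the sharper $\mathfrak{J}(h/N)=(1+O(L^{-1}))\mathfrak{J}(0)$ in place of your continuity bound; both variants lead to the stated inequality.
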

\begin{proof}Recalling (\ref{e1}) and (\ref{msmall}), one has $C(\mathcal{M})\subseteq
\mathfrak{m}$ and
\begin{align*}\int_{C(\mathcal{M})}|T(\alpha)^4G(\alpha)^{14}|d\alpha\
\le \
\int_{\mathfrak{m}}|T(\alpha)^4G(\alpha)^{14}|d\alpha.\end{align*}
Note that
\begin{align*}\int_{\mathfrak{M}}|T(\alpha)^4G(\alpha)^{14}|d\alpha=\sum_{h\not=0}r_7(h)
\int_{\mathfrak{M}}|T(\alpha)^4|e(h\alpha)d\alpha+O(NL^{9}).\end{align*}
For $h\not=0$, the standard argument provides
\begin{align*}\int_{\mathfrak{M}}|T(\alpha)^4|e(h\alpha)d\alpha=8
&\mathbf{S}(h)\mathfrak{J}(h/N)N+O(NL^{-100}).\end{align*}Therefore
\begin{align*}\int_{\mathfrak{M}}|T(\alpha)^4G(\alpha)^{14}|d\alpha=8\sum_{h\not=0}r_7(h)
&\mathbf{S}(h)\mathfrak{J}(h/N)N+O(NL^{9}).\end{align*} Recalling
that $h\le NL^{-1}$, one has
\begin{align*}\int_{\mathfrak{M}}|T(\alpha)^4G(\alpha)^{14}|d\alpha=8\mathfrak{J}(0)(1+O(L^{-1}))N\sum_{h\not=0}r_7(h)
&\mathbf{S}(h)+O(NL^{9}).\end{align*}By Lemma \ref{lemma41}, we
obtain
\begin{align*}\int_{\mathfrak{m}}|T(\alpha)^4G(\alpha)^{14}|d\alpha=& \int_{0}^{1}\, -\, \int_{\mathfrak{M}}\
\ \\ \le& 8(15+\varepsilon)\sum_{h\not=0}r_7(h)
\mathbf{S}(h)(1+O(\eta))\mathfrak{J}(0)N+O(NL^{13}).\end{align*}
The desired conclusion is established.\end{proof}

\section{Numerical computations}
Throughout this section, we use $h$ to denote
$\sum_{j=1}^{7}(2^{u_j}-2^{v_j})$. For odd $q$, denote by
$\varrho(q)$ the smallest positive integer $\varrho$ such that
$2^{\varrho(q)}\equiv 1\pmod q$.

Define
\begin{align*}a(p)=&
\begin{cases}-(p+1)^2\ \ & \textrm{ if }\  p\equiv 3\pmod{4}
\\ 3p^2-2p-1\ \ & \textrm{ if }\  p\equiv 1\pmod{4}\end{cases},\end{align*}
and \begin{align*}b(p)=&
\begin{cases}(p-1)(p+1)^2\ \ & \textrm{ if }\  p\equiv 3\pmod{4}
\\(p-1)(p^2+6p+1)\ \ & \textrm{ if }\  p\equiv 1\pmod{4}\end{cases}
.\end{align*} Then we define the multiplicative function $c(d)$ by
$1+\frac{1}{c(p)}=\frac{1+\frac{b(p)}{(p-1)^4}}{1+\frac{a(p)}{(p-1)^4}}$,
where $d$ is square-free and $(30,d)=1$.
\begin{lemma}\label{constant} Let $c_0=\frac{25}{32}c_1+(\frac{3}{2}-\frac{25}{32})c_2$, where \begin{eqnarray*}c_1&:=&\sum_{\substack{
p|d\Rightarrow p>5}}
\frac{\mu^2(d)}{c(d)\varrho^{14}(3d)}\sum_{\substack{1\le
u_j,v_j\le \varrho(3d), 1\le j\le 7\\
3d|h}}1,\end{eqnarray*}and
\begin{eqnarray*}c_2&:=&\sum_{\substack{ p|d\Rightarrow p>5}}
\frac{\mu^2(d)}{c(d)\varrho^{14}(15d)}\sum_{\substack{1\le
u_j,v_j\le \varrho(15d), 1\le j\le 7\\
15d|h}}1.\end{eqnarray*} One has \begin{eqnarray*}c_0&<&
0.69.\end{eqnarray*}\end{lemma}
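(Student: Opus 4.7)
The plan is to reduce both $c_1$ and $c_2$ to Euler products over primes $p > 5$ and then bound these products by a mixture of explicit computation for small primes and tail estimates for large primes.

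First, I would analyze the inner sum $R(M,d) := \varrho^{-14}(Md) \cdot \#\{1 \le u_j, v_j \le \varrho(Md) : Md \mid h\}$ for $M \in \{3,15\}$ and square-free $d$ with $(d,30)=1$. Because $Md$ is square-free and $\varrho(p) \mid \varrho(Md)$ for every prime $p \mid Md$, the variables $u_j$ and $v_j$ are uniformly distributed modulo $\varrho(p)$ as they range over $\{1,\ldots,\varrho(Md)\}$. By CRT, the condition $Md\mid h$ factors into the independent conditions $p\mid h$ for each $p\mid Md$. Hence, setting
\begin{equation*}
\omega(p) \;=\; \frac{1}{\varrho^{14}(p)}\,\#\bigl\{1\le u_j, v_j\le \varrho(p)\,:\,p\mid h\bigr\},
\end{equation*}
we obtain $R(M,d)=\omega(M)\prod_{p\mid d}\omega(p)$. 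Since $c(d)$ is multiplicative and supported on square-free $d$ with $(d,30)=1$, this collapses $c_1$ and $c_2$ into Euler products
\begin{equation*}
c_1 \;=\; \omega(3)\!\!\prod_{p>5}\!\Bigl(1+\tfrac{\omega(p)}{c(p)}\Bigr),\qquad c_2 \;=\; \omega(3)\,\omega(5)\!\!\prod_{p>5}\!\Bigl(1+\tfrac{\omega(p)}{c(p)}\Bigr).
\end{equation*}

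Next, I would separate each product at a threshold $p = P_0$ (to be chosen, perhaps around $P_0 = 10^3$ or $10^4$) and handle the tail. For large $p$, Fourier analysis on $\Z/p\Z$ combined with a Weil-type estimate for the character sums $\sum_{u=1}^{\varrho(p)} e(2^u a/p)$ gives $\omega(p) = 1/p + O(p^{-3/2})$ (the error is smaller still when $\varrho(p)$ is large relative to $p$). Combined with the explicit asymptotics $c(p)^{-1} = \bigl(b(p)-a(p)\bigr)/(p-1)^4 + O(p^{-5}) = O(p^{-2})$ from the given formulas for $a(p), b(p)$, the tail factor $\prod_{p\ge P_0}\bigl(1+\omega(p)/c(p)\bigr)$ is $1 + O(P_0^{-1})$ and can be absorbed into an explicit multiplicative correction.

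For the finite part, I would compute, for every prime $5 < p < P_0$, the order $\varrho(p)$, the counting function $N(p) = \varrho^{14}(p)\omega(p)$ (evaluated via the character sum above, or directly by enumeration when $\varrho(p)$ is manageable), the values $a(p), b(p)$ from the piecewise definition, and thence $c(p)$. The small prime factors $\omega(3)$ and $\omega(5)$ are single computations (both with $\varrho(3)=2$, $\varrho(5)=4$), and then the partial product $\prod_{5<p<P_0}(1+\omega(p)/c(p))$ is assembled. Substituting everything into $c_0 = \tfrac{25}{32}c_1+(\tfrac{3}{2}-\tfrac{25}{32})c_2$ yields the numerical bound.

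The main obstacle is the numerical side: primes with large $\varrho(p)$ (in the worst case, $\varrho(p)=p-1$) make direct enumeration of $N(p)$ expensive, so one must either use the character-sum representation with care or truncate at moderate $P_0$ and control the tail rigorously. A secondary subtlety is that $\omega(p)/c(p)$ is not uniformly small for the smallest primes beyond $5$, so the partial product must be computed with enough precision for the bound $0.69$ to be safely verified rather than merely plausibly suggested.
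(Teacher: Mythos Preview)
Your factorization $R(M,d)=\omega(M)\prod_{p\mid d}\omega(p)$ is incorrect, and this is the crux of the matter. While it is true that each $u_j$ is marginally uniform modulo $\varrho(p)$ for every $p\mid Md$, the residues $u_j\bmod\varrho(p)$ and $u_j\bmod\varrho(q)$ are \emph{not} jointly independent when $\gcd(\varrho(p),\varrho(q))>1$, which happens already for small primes (e.g.\ $\varrho(7)=3$ divides $\varrho(13)=12$). The event $p\mid h$ is a function of $(u_j\bmod\varrho(p))_j$ alone, so two such events can be strongly correlated. A quick check with a single pair $(u,v)$ in place of fourteen variables, $h=2^u-2^v$, and the primes $7,13$ gives $\Pr[\,7\mid h\ \text{and}\ 13\mid h\,]=1/12$ over $u,v\in\{1,\dots,12\}$, whereas $\omega(7)\,\omega(13)=1/36$. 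Hence $c_1,c_2$ do not collapse to Euler products, and the numerical scheme you build on top of that collapses with them. (A smaller slip: from the definitions one finds $1/c(p)\sim 1/p$, not $O(p^{-2})$; this alone would not be fatal, since $\omega(p)/c(p)$ is still roughly $p^{-2}$.)

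The paper's argument avoids any multiplicativity of the densities altogether. It writes $c_1=\sum_d \mu^2(d)/(c(d)\beta(d))$ with $\beta(d)^{-1}$ the density in question, uses the crude inequality $\beta(d)\ge\varrho(3d)$, and then controls $\sum_{\varrho(3d)\le x}\mu^2(d)/c(d)$ by observing that such $d$ must divide $m(x)=\prod_{e\le x}(2^e-1)$. The resulting product over $p\mid m(x)$ is bounded via a Mertens-type estimate $m(x)/\phi(m(x))\le e^\gamma\log x$, yielding a tail of size $O((1+\log M)/M)$ after splitting at a level $M$ (the paper takes $M=40$); the finitely many $d$ with $\beta(d)<M$ are then computed directly. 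This sidesteps the non-independence entirely.
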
\begin{proof}The proof is to follow
the lines in \cite{LLW1}. Set
$$\beta(d)=\big(\frac{1}{\varrho^{14}(3d)}\sum_{\substack{1\le
u_j,v_j\le \varrho(3d)\ 1\le j\le 7\\
3d|h}}1\big)^{-1}.$$ Then we have
\begin{align*}&c_1=\sum_{\substack{ p|d\Rightarrow p>5}}
\frac{\mu^2(d)}{c(d)}\int_{\beta(d)}^\infty\frac{d
x}{x^2}=\int_{2}^\infty\sum_{\substack{ p|d\Rightarrow p>5\\
\beta(d)\le x}}\frac{\mu^2(d)}{c(d)}\frac{d x}{x^2}.\end{align*}
Clearly $\beta(d)\ge \varrho(3d)$, so
$$\sum_{\substack{ p|d\Rightarrow p>5\\
\beta(d)\le x}}\frac{\mu^2(d)}{c(d)}\le \sum_{\substack{ p|d\Rightarrow p>5\\
\varrho(3d)\le x}}\frac{\mu^2(d)}{c(d)}.$$ Let
$m(x)=\prod_{e\le x}(2^e-1)$, and for $x\ge 3$ we have\begin{align*}\sum_{\substack{ p|d\Rightarrow p>5\\
\beta(d)\le x}}\frac{\mu^2(d)}{c(d)}\le \sum_{\substack{ p|d\Rightarrow p>5\\
3d|m(x)}}\frac{\mu^2(d)}{c(d)}&\le \prod_{\substack{p>5\\
p|m(x)}}(1+\frac{1}{c(p)}) \\ &\le \prod_{\substack{p>5}}\frac{1+\frac{1}{c(p)}}{1+\frac{1}{p-1}}\prod_{\substack{p>5\\
p|m(x)}}(1+\frac{1}{p-1}).\end{align*} It was proved in
\cite{LLW1} that $m(x)/\phi(m(x))\le e^{\gamma}\log x$ for $x\ge
9$. If $x\ge 9$, then
$$\sum_{\substack{ p|d\Rightarrow p>3\\
\beta(d)\le x}}\frac{\mu^2(d)}{c(d)}\le
\frac{8c_3}{15}e^{\gamma}\log x,$$where
$c_3=\prod_{\substack{p>5}}\frac{1+\frac{1}{c(p)}}{1+\frac{1}{p-1}}
\le 1.3904$. Let $M=40$. We have\begin{align*}c_1=&\int_{2}^{M}\sum_{\substack{ p|d\Rightarrow p>5\\
\beta(d)\le x}}\frac{\mu^2(d)}{c(d)}\frac{d x}{x^2}+\int^{\infty}_{M}\sum_{\substack{ p|d\Rightarrow p>5\\
\beta(d)\le x}}\frac{\mu^2(d)}{c(d)}\frac{d x}{x^2}\\\le
&\sum_{\substack{ p|d\Rightarrow p>5\\
\beta(d)<M}}\frac{\mu^2(d)}{c(d)}\int_{\beta(d)}^{M}\frac{d
x}{x^2}+\int^{\infty}_{M}\frac{8c_3}{15}e^{\gamma}\log x\frac{d
x}{x^2}\\=&\sum_{\substack{ p|d\Rightarrow p>3\\
\beta(d)<M}}\frac{\mu^2(d)}{c(d)}\big(\frac{1}{\beta(d)}-\frac{1}{M}\big)+\frac{8c_3}{15}e^{\gamma}\frac{1+\log
M}{M}.\end{align*} The constant $c_2$ can be handled in the
similar way. Then the numerical computations provide the desired
result.
\end{proof}

In the following lemma, the condition $(h)$ in $\sum_{(h)}$ means
the summation is taken over all $(u_1,\ldots,u_7,v_1,\ldots,v_7)$
satisfying $4\le u_j,v_j\le L$ and
$h=\sum_{j=1}^{7}(2^{u_j}-2^{v_j})\not= 0$.
\begin{lemma}
\label{constant2} Let
$\kappa(h)=\begin{cases}\frac{25+15(\frac{h}{5})}{32} & \textrm{
if }\ 5\nmid h \\ \frac{3}{2} & \textrm{ if }\ 5|h
\end{cases}$.
Then we have \begin{eqnarray}\label{inec}\sum_{\substack{(h)\\
h\equiv 0\pmod
3}}\kappa(h)\prod_{\substack{p>5\\
p|h}}\big(1+\frac{1}{c(p)}\big)&\le &\big(
\frac{25}{32}c_1+(\frac{3}{2}-\frac{25}{32})c_2+\varepsilon\big)L^{14}.\end{eqnarray}
\end{lemma}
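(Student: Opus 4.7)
The plan is to expand the Euler product in \eqref{inec} by multiplicativity,
\[
\prod_{\substack{p>5\\ p\mid h}}\Big(1+\frac{1}{c(p)}\Big)\ =\ \sum_{\substack{d\mid h\\ (d,30)=1}}\frac{\mu^2(d)}{c(d)},
\]
and to decompose $\kappa(h)$ into a constant part, a correction for $5\mid h$, and a quadratic character part,
\[
\kappa(h)\ =\ \frac{25}{32}\ +\ \Big(\frac{3}{2}-\frac{25}{32}\Big)\mathbf{1}_{5\mid h}\ +\ \frac{15}{32}\Big(\frac{h}{5}\Big),
\]
using the convention $(h/5)=0$ when $5\mid h$. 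Substituting both into the left-hand side of \eqref{inec} and interchanging the order of summation yields $\frac{25}{32}\Sigma_1+\big(\frac{3}{2}-\frac{25}{32}\big)\Sigma_2+\frac{15}{32}\Sigma_3$, where $\Sigma_1$ and $\Sigma_2$ are $\sum_d \mu^2(d)/c(d)$ times the number of tuples $(u,v)\in[4,L]^{14}$ with $h\ne 0$ and $3d\mid h$ resp.\ $15d\mid h$, and $\Sigma_3$ is the analogous sum twisted by $(h/5)$ and restricted by $3d\mid h$.

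For $\Sigma_1$ and $\Sigma_2$, since $m\in\{3d,15d\}$ is odd, the condition $m\mid h$ depends only on the residues $(u_j,v_j)\bmod\varrho(m)$. A standard lattice-point count in $[4,L]^{14}$ gives
\[
\#\{(u,v)\in[4,L]^{14}: m\mid h\}\ =\ \frac{L^{14}}{\varrho^{14}(m)}\,\#\{(u,v)\in[1,\varrho(m)]^{14}: m\mid h\}\ +\ O\!\big(L^{13}\varrho(m)\big).
\]
Inserting this, truncating at $d\le D$ with some $D=D(L)\to\infty$ chosen appropriately, and bounding the tail via the convergence estimate $\sum_{\beta(d)\le x}\mu^2(d)/c(d)\ll\log x$ from the proof of Lemma~\ref{constant}, one obtains $\Sigma_1=(c_1+o(1))L^{14}$ and $\Sigma_2=(c_2+o(1))L^{14}$.

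The main obstacle is proving $\Sigma_3=o(L^{14})$. For fixed $d$, the Gauss-sum expansion $(h/5)=\frac{1}{\sqrt 5}\sum_{k=1}^{4}(k/5)e(kh/5)$ together with the orthogonality $\mathbf{1}_{3d\mid h}=(3d)^{-1}\sum_{a\bmod 3d}e(ah/(3d))$ reduces the inner sum to
\[
\sigma_3(d)\ =\ \frac{L^{14}/\varrho^{14}(15d)}{3d\sqrt 5}\sum_{k=1}^{4}\Big(\frac{k}{5}\Big)\sum_{a\bmod 3d}|G_{\varrho(15d)}(a/(3d)+k/5)|^{14}\ +\ O\big(L^{13}\varrho(15d)\big),
\]
where $G_\varrho(\alpha)=\sum_{u=1}^{\varrho}e(2^u\alpha)$. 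Because $2$ is a primitive root modulo $5$, the sum $\sum_{u\bmod 4}e(2^u k/5)=-1$ is independent of $k\in\{1,2,3,4\}$, and combined with the coprimality $(3d,5)=1$ this forces $|G_{\varrho(15d)}(a/(3d)+k/5)|^{14}$ to be the same for all four values of $k$. Hence the $k$-sum collapses to $\sum_{k=1}^{4}(k/5)=0$, so $\sigma_3(d)\ll L^{13}\varrho(15d)$. Using $\varrho(15d)\le 15d$ and $c(d)\gg\phi(d)$, and choosing $D$ so that $D\log\log D=o(L)$, this yields $\Sigma_3\ll L^{13}\cdot D\log\log D=o(L^{14})$.

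The trickiest step I expect is the precise verification that $|G_{\varrho(15d)}(a/(3d)+k/5)|^{14}$ is independent of $k\in\{1,2,3,4\}$; this relies on the action of $2$ on $(\Z/5\Z)^\ast$ being transitive and on a mild refinement of the CRT decomposition of variables modulo $\varrho(15d)$ because $\varrho(3)=2$ already divides $\gcd(\varrho(3d),4)$. Once this cancellation is established, assembling the three estimates and inserting the identification $c_0=\frac{25}{32}c_1+\big(\frac{3}{2}-\frac{25}{32}\big)c_2$ completes the proof of \eqref{inec}.
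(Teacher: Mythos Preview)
Your approach is essentially the paper's: the same decomposition
\[
\kappa(h)=\tfrac{25}{32}+\big(\tfrac{3}{2}-\tfrac{25}{32}\big)\mathbf{1}_{5\mid h}+\tfrac{15}{32}\big(\tfrac{h}{5}\big),
\]
the same expansion of $\prod_{p>5,\,p\mid h}(1+1/c(p))$ as $\sum_{d\mid h,\,(d,30)=1}\mu^2(d)/c(d)$, truncation of the $d$-sum, and the same reduction to $\Sigma_1,\Sigma_2$ (bounded by $(c_1+\varepsilon)L^{14}$ and $(c_2+\varepsilon)L^{14}$ via periodicity in $u_j,v_j$ modulo $\varrho(3d)$, resp.\ $\varrho(15d)$) together with a Legendre-character contribution that is $o(L^{14})$. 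The paper records the last point as a bare ``$+o(L^{14})$'' without further argument, so your explicit treatment of $\Sigma_3$ is a genuine addition rather than a deviation.

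One correction to that treatment: for fixed $a$, the value $|G_{\varrho(15d)}(a/(3d)+k/5)|^{14}$ is \emph{not} in general independent of $k$; your stated reason (that $\sum_{u\bmod 4}e(2^uk/5)=-1$ for every $k$) does not by itself force this. What is true, and is all you need, is that the \emph{$a$-averaged} quantity $\sum_{a\bmod 3d}|G_{\varrho(15d)}(a/(3d)+k/5)|^{14}$ is independent of $k\in\{1,2,3,4\}$. Indeed, the shift $u\mapsto u+1$ gives $G_\varrho(b/(15d))=G_\varrho(2b/(15d))$, and writing $b=5a+3dk$ one sees that doubling $b$ amounts to the substitution $(a,k)\mapsto(2a\bmod 3d,\,2k\bmod 5)$; since $a\mapsto 2a$ is a bijection of $\Z/3d\Z$ and $k\mapsto 2k$ acts transitively on $(\Z/5\Z)^\ast$, the $a$-sum is the same for all four $k$, and then $\sum_{k=1}^{4}(k/5)=0$ annihilates the main term of $\sigma_3(d)$ exactly as you intend. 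With this adjustment your bound $\Sigma_3=o(L^{14})$ (tail in $d$ controlled as for $\Sigma_1$) goes through.
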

\begin{proof}The left hand side of \eqref{inec} is equal to
\begin{align*}&\sum_{\substack{(h)\\ h\equiv 0\pmod 3 \\ 5\nmid h}}\
\frac{25+15(\frac{h}{5})}{32}\prod_{\substack{p>5\\
p|h }}\big(1+\frac{1}{c(p)}\big)+\frac{3}{2}\sum_{\substack{(h)\\
h\equiv 0\pmod {15}}}\prod_{\substack{p>5\\
p|h}}\big(1+\frac{1}{c(p)}\big)
\\=\ &\sum_{\substack{(h)\\ h\equiv 0\pmod 3 \\ 5\nmid h}}\
\frac{25}{32}\prod_{\substack{p>5\\
p|h }}\big(1+\frac{1}{c(p)}\big)+ \frac{3}{2}\sum_{\substack{(h)\\
h\equiv 0\pmod {15}}}\prod_{\substack{p>5\\
p|h}}\big(1+\frac{1}{c(p)}\big)+o(L^{14})
\\=\ &\frac{25}{32}\sum_{\substack{(h)\\ h\equiv 0\pmod 3 }}\
\prod_{\substack{p>5\\
p|h }}\big(1+\frac{1}{c(p)}\big)+(\frac{3}{2}-\frac{25}{32})\sum_{\substack{(h)\\
h\equiv 0\pmod {15}}}\prod_{\substack{p>5\\
p|h}}\big(1+\frac{1}{c(p)}\big) \\ &\ \ \ \ \ \ \ \ +o(L^{14})
\\=:&\frac{25}{32}\,\Sigma_1+(\frac{3}{2}-\frac{25}{32})\,\Sigma_2+o(L^{14}).\end{align*}
Let us consider $\Sigma_1$. One has
\begin{align*}\Sigma_1=&\sum_{\substack{(h)\\ h\equiv 0\pmod 3 }}\sum_{\substack{d|h \\
p|d\Rightarrow p>5}} \frac{\mu^2(d)}{c(d)}=\sum_{\substack{(h)\\ h\equiv 0\pmod 3 }}\sum_{\substack{d<N^\varepsilon \\ d|h \\
p|d\Rightarrow p>5}}
\frac{\mu^2(d)}{c(d)}+O(N^{-\varepsilon}) \\
\le & \sum_{\substack{d<N^\varepsilon  \\
p|d\Rightarrow p>5}} \frac{\mu^2(d)}{c(d)}\sum_{\substack{1\le
u_j,v_j\le L\\
3d|h}}1+O(N^{-\varepsilon}) \ =: \ \Sigma_1' \
+O(N^{-\varepsilon}).\end{align*} The sum $\Sigma_1'$ is bounded
by
\begin{align*}\le\ &\sum_{\substack{d<N^\varepsilon \\
p|d\Rightarrow p>5 \\ \varrho(3d)<L}}
\frac{\mu^2(d)}{c(d)}\sum_{\substack{1\le
u_j,v_j\le \varrho(3d)\\
3d|h}}\big(\frac{L}{\varrho(3d)}+O(1)\big)^{14}+\sum_{\substack{d<N^\varepsilon \\
p|d\Rightarrow p>5 \\ \varrho(3d)\ge L}}
\frac{\mu^2(d)}{c(d)}L^{13}
\\ \le\ &L^{14}
\sum_{\substack{d<N^\varepsilon  \\
p|d\Rightarrow p>5 }}
\frac{\mu^2(d)}{c(d)\varrho(3d)^{14}}\sum_{\substack{1\le
u_j,v_j\le \varrho(3d)\\
3d|h}}1+O(\varepsilon) L^{14}.\end{align*} Therefore $\Sigma_1\le
(c_1+\varepsilon) L^{14}$. Similarly, $\Sigma_2\le
(c_2+\varepsilon) L^{14}$. Now the desired conclusion is
established.
\end{proof}

\begin{lemma}\label{constant3}Let $\mathbf{S}(h)$ be given by \eqref{defSB}. Then we have
\begin{eqnarray*}\sum_{h\not=0}r_7(h)\mathbf{S}(h) &\le & 3c_0L^{14},\end{eqnarray*}
where $c_0$ is given by Lemma \ref{constant}.
\end{lemma}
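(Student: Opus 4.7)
The plan is to decompose the singular series $\mathbf{S}(h)=\prod_{p>2}(1+\mathbf{B}(p,h)/(p-1)^4)$ into its Euler factors, bound each explicitly, and then invoke Lemma \ref{constant2}. I begin by isolating the local factor at $p=3$: a short computation gives $|C^\ast(3,a)|^4=16$ for $a\in\{1,2\}$, so $\mathbf{B}(3,h)=-16$ when $3\nmid h$ and $\mathbf{B}(3,h)=32$ when $3\mid h$. The factor $1+\mathbf{B}(3,h)/16$ is therefore $0$ in the first case and $3$ in the second, which reduces the sum to $h$ with $3\mid h$ and pulls a factor $3$ in front — accounting for the $3$ in the bound $3c_0L^{14}$.

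Next I handle the primes $p>5$. Standard Gauss-sum evaluations show that, for $p\nmid h$, $\mathbf{B}(p,h)=a(p)$ when $p\equiv 3\pmod 4$, while for $p\equiv 1\pmod 4$ it equals $a(p)$ or the strictly smaller value $-5p^2-10p-1$ according to whether the Legendre symbol $(h/p)$ is $-1$ or $+1$; and $\mathbf{B}(p,h)=b(p)$ whenever $p\mid h$. Using the defining identity $1+b(p)/(p-1)^4=(1+a(p)/(p-1)^4)(1+1/c(p))$ to factor the divisor-supported contribution yields
\begin{align*}
\prod_{p>5}\Bigl(1+\frac{\mathbf{B}(p,h)}{(p-1)^4}\Bigr)\ \le\ C_\infty\prod_{\substack{p>5\\ p\mid h}}\Bigl(1+\frac{1}{c(p)}\Bigr),\ \ \ C_\infty\ :=\ \prod_{p>5}\Bigl(1+\frac{a(p)}{(p-1)^4}\Bigr).
\end{align*}
At $p=5$ I retain the exact value of the local factor: $|C^\ast(5,a)|^4=56\mp 24\sqrt{5}$ according to whether $a$ is a non-residue or residue modulo $5$, giving $\lambda(h):=1+\mathbf{B}(5,h)/256=(25-15(h/5))/32$ when $5\nmid h$ and $\lambda(h)=15/8$ when $5\mid h$.

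Combining, I arrive at
\begin{align*}
\sum_{h\neq 0}r_7(h)\mathbf{S}(h)\ \le\ 3C_\infty\sum_{\substack{(h)\\ 3\mid h}}\lambda(h)\prod_{\substack{p>5\\ p\mid h}}\Bigl(1+\frac{1}{c(p)}\Bigr).
\end{align*}
The right-hand side differs from the quantity bounded in Lemma \ref{constant2} only in the sign of $(h/5)$ within $\lambda(h)$ versus $\kappa(h)$, together with the constant $C_\infty$. I reconcile these by the character-cancellation estimate
\begin{align*}
\sum_{\substack{(h)\\ 3\mid h}}\Bigl(\frac{h}{5}\Bigr)\prod_{\substack{p>5\\ p\mid h}}\Bigl(1+\frac{1}{c(p)}\Bigr)\ =\ o(L^{14}),
\end{align*}
which follows from the equidistribution of $\sum_{j=1}^{7}(2^{u_j}-2^{v_j})$ modulo $5$ — a consequence of $2$ generating $(\Z/5\Z)^\ast$ and the exponents ranging over a long interval. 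After this cancellation, $\lambda(h)$ is effectively replaced by $25/32$ when $5\nmid h$ and $15/8$ when $5\mid h$, which matches the coefficients in $\kappa(h)$ up to a numerical factor. The main obstacle is the closing numerical verification — analogous to the evaluation $c_3\le 1.3904$ in Lemma \ref{constant} — that $C_\infty$ combined with the $5\mid h$ contribution does not exceed the coefficient $3/2-25/32$ appearing in the definition of $c_0$; once this is confirmed, Lemma \ref{constant2} directly yields the bound $3c_0L^{14}$.
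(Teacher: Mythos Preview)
Your decomposition is exactly the paper's: extract the factor $3$ at $p=3$ (and $0$ when $3\nmid h$), bound $\prod_{p>5}$ by $c_4\prod_{p>5,\,p\mid h}(1+1/c(p))$ via $\mathbf{B}(p,h)\le a(p)$ for $p\nmid h$ and $\mathbf{B}(p,h)=b(p)$ for $p\mid h$, isolate $p=5$, and invoke Lemma~\ref{constant2}. The paper records the outcome as the pointwise inequality $\mathbf{S}(h)\le 3\,c_4\,\widetilde{\kappa}(h)\prod_{p>5,\,p\mid h}(1+1/c(p))$ and then cites Lemmas~\ref{constant}--\ref{constant2}; your $C_\infty$ is precisely the paper's $c_4\le 0.9743$.

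The gap is your closing step. For $5\nmid h$ the sign discrepancy between your $\lambda(h)=(25-15(h/5))/32$ and the paper's $\kappa(h)=(25+15(h/5))/32$ is indeed harmless after the character cancellation, as you argue. But for $5\mid h$ you correctly compute $\lambda(h)=15/8$, whereas $\kappa(h)=3/2$, and the ``closing numerical verification'' you defer does not succeed: absorbing the excess via $C_\infty$ would require $c_4\cdot(15/8-25/32)\le 3/2-25/32$, i.e.\ $c_4\cdot 35/32\le 23/32$, and with $c_4\approx 0.974$ the left side is about $1.07$ against $0.72$ on the right. Hence you cannot reduce to Lemma~\ref{constant2} as written; to finish one must either carry the coefficient $35/32$ (in place of $23/32$) through the numerics of Lemma~\ref{constant} and redefine $c_0$ accordingly, or supply a separate estimate showing that the $15\mid h$ contribution is small enough to absorb the excess. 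Your outline has the right structure, but the deferred check is not a formality and in fact fails as stated. (Minor aside: the $\mp$ in $|C^\ast(5,a)|^4=56\mp 24\sqrt{5}$ has the residue/non-residue cases swapped, though your final $\lambda(h)$ comes out correctly.)
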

\begin{proof}
Note that
\begin{align*}\mathbf{B}(p,h)=
\begin{cases}-(p+1)^2\ & \textrm{ if }\  p\equiv 3\pmod{4}\ \textrm{ and } p\nmid h
\\ -(p^2+6p+1)-4p(p+1)\big(\frac{h}{p}\big)\ & \textrm{ if }\  p\equiv 1\pmod{4}\ \textrm{ and } p\nmid h
\\(p-1)(p+1)^2\ & \textrm{ if }\  p\equiv 3\pmod{4}\ \textrm{ and } p| h
\\(p-1)(p^2+6p+1)\ & \textrm{ if }\  p\equiv 1\pmod{4}\ \textrm{ and } p| h\end{cases}
.\end{align*} Then we have
\begin{align*}
\mathbf{S}(h)\ \le&\ 3\,\widetilde{\kappa}(h)
\prod_{p>5}\big(1+\frac{a(p)}{(p-1)^4}\big)\prod_{\substack{5<p
\\
p|h}}\frac{1+\frac{b(p)}{(p-1)^4}}{1+\frac{a(p)}{(p-1)^4}},\end{align*}where
$\widetilde{\kappa}(h)=\kappa(h)$ if $3|h$ and zero otherwise. One
has
\begin{align*}c_4=&
\ \prod_{p>5}\big(1+\frac{a(p)}{(p-1)^4}\big) \le
0.9743.\end{align*}Therefore,
\begin{align*}
\mathbf{S}(h)\ \le&\ 3\, c_4\,\widetilde{\kappa}(h)
\prod_{\substack{5<p
\\ p|h}}\big(1+\frac{1}{c(p)}\big).\end{align*}
The conclusion now follows from Lemmas
\ref{constant}-\ref{constant2}.
\end{proof}
Let $$\Xi(N,k)=\{n\ge 2: n=N-2^{\nu_1}-\cdots-2^{\nu_k}, 4\le
\nu_1,\ldots,\nu_k \le L\}$$ for positive integer $k$.
\begin{lemma}
\label{l5.1}For $k\ge 35$ and $N\equiv 4\pmod{8}$, one has
$$\frac{1}{8}\sum_{\substack{n\in \Xi(N,k)\\ n\equiv 4\pmod{24}}}\mathfrak{S}(n)\ge 0.9NL^{k}.$$
\end{lemma}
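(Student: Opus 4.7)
The plan is to reduce the congruence $n\equiv 4\pmod{24}$ to a parity condition on the $\nu_j$, count the admissible tuples in $[4,L]^k$, and combine this count with a uniform lower bound for $\mathfrak{S}(n)$ obtained from the Euler product \eqref{e5}.

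First, since $N\equiv 4\pmod 8$ by hypothesis and every $2^{\nu_j}$ with $\nu_j\ge 4$ is divisible by $8$, the condition $n\equiv 4\pmod 8$ is automatic, so $n\equiv 4\pmod{24}$ reduces to $n\equiv 1\pmod 3$. Using $2^{\nu_j}\equiv(-1)^{\nu_j}\pmod 3$ this becomes the linear constraint $\sum_{j=1}^{k}(-1)^{\nu_j}\equiv N-1\pmod 3$ on the parity distribution of the $\nu_j$. Orthogonality of additive characters modulo $3$ then gives
$$\#\{(\nu_1,\ldots,\nu_k)\in[4,L]^k : n\equiv 4\pmod{24}\}=\tfrac{1}{3}(L-3)^k+O\bigl((L/2)^k\bigr),$$
the error coming from the non-principal characters, which contribute $|L_e\,\omega+L_o\,\bar\omega|^k\ll (L_e^2-L_eL_o+L_o^2)^{k/2}\ll (L/2)^k$, with $L_e,L_o$ the numbers of even and odd integers in $[4,L]$. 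For $k\ge 35$ the error is $\ll 2^{-35}L^k$, so the count equals $\tfrac{1}{3}L^k(1+o(1))$.

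Second, I expand $\mathfrak{S}(n)=24\prod_{p>3}(1+A(n,p))$ from \eqref{e5} and bound each factor below. Each factor is a non-negative local density, and the standard estimate $|A(n,p)|\ll p^{-3/2}$ makes the Euler product absolutely convergent uniformly in $n$. Evaluating the local factors explicitly at small primes (distinguishing $p\equiv 1\pmod 4$ from $p\equiv 3\pmod 4$, and $p\mid n$ from $p\nmid n$, in the spirit of the $a(p),b(p)$ table used in Lemma \ref{constant}) and controlling the tail by $\exp\bigl(\sum_{p>P_0}|A(n,p)|\bigr)$ for a suitable cutoff $P_0$, one obtains a uniform lower bound $\mathfrak{S}(n)\ge c_{\mathfrak{S}}$ valid for all $n\equiv 4\pmod{24}$. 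Combined with the count from the first step this yields
$$\tfrac{1}{8}\sum_{\substack{n\in\Xi(N,k)\\ n\equiv 4\pmod{24}}}\mathfrak{S}(n)\ge\tfrac{c_{\mathfrak{S}}}{24}\,L^k\,(1+o(1)),$$
from which the asserted bound will follow, once $c_{\mathfrak{S}}$ is shown to be sufficiently large to absorb the $\tfrac{1}{24}$ and the $o(1)$-error against the target constant.

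The main obstacle is the explicit numerical verification of $c_{\mathfrak{S}}$, since the factors $(1+A(n,p))$ at primes $p\equiv 3\pmod 4$ with $p\nmid n$ are strictly less than $1$ and can depress the product. Following the numerical strategy already used in Section 4 and in Liu-Liu-Wang, I would split the Euler product at an explicit threshold $P_0$, evaluate the partial product over $p\le P_0$ exactly in each relevant residue class modulo $24$, and bound the tail by an explicit absolutely convergent series. The hypothesis $k\ge 35$ enters precisely to make the non-principal character contribution in the first step genuinely $o(L^k)$ relative to the main term $L^k/3$.
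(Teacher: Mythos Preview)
Your approach has a genuine quantitative gap that cannot be repaired: a uniform pointwise lower bound for $\mathfrak{S}(n)$ is far too weak to reach the constant $0.9$. Using the worst-case estimates recorded in the paper, for $p\equiv 1\pmod 4$ with $p\nmid n$ one has $1+A(n,p)\ge 1-\frac{5p^2+10p+1}{(p-1)^4}$, and at $p=5$ this already gives $1-\tfrac{176}{256}=0.3125$. Combined with the factors at $7,11,13$ the product $\prod_{3<p<17}(1+A(n,p))$ can be as small as roughly $0.23$, so any uniform constant $c_{\mathfrak S}$ satisfies $c_{\mathfrak S}/24<0.25$. Your scheme therefore yields at best
\[
\frac{1}{8}\sum_{\substack{n\in\Xi(N,k)\\ n\equiv 4\pmod{24}}}\mathfrak{S}(n)\ \ge\ \frac{c_{\mathfrak S}}{24}\,L^k(1+o(1))\ <\ 0.25\,L^k,
\]
which misses the target $0.9\,L^k$ by a wide margin. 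No amount of tail estimation past a cutoff $P_0$ can recover this; the loss occurs at the small primes $5,7,11,13$.

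The paper's proof avoids this loss by \emph{averaging} rather than bounding pointwise. One first isolates $\prod_{p\ge 17}(1+A(n,p))\ge C_1\approx 0.9048$ uniformly, and then, for $q=5\cdot 7\cdot 11\cdot 13$, uses the exact identity $\sum_{j=1}^{p}(1+A(j,p))=p$ to show that the average of $\prod_{3<p<17}(1+A(n,p))$ over residues $n\bmod q$ equals $1$. To exploit this one must prove that the tuples $(\nu_1,\ldots,\nu_k)$ with $4\le\nu_j\le L$ equidistribute among the residue classes modulo $3q=15015$; here $\varrho(3q)=60$, and the relevant non-principal character sum satisfies
\[
\max_{1\le j\le 3q-1}\Bigl|\sum_{1\le s\le 60}e\Bigl(\tfrac{j\,2^s}{3q}\Bigr)\Bigr|<34.6,
\]
so the non-principal contribution is at most $(3q-1)(34.6/60)^k$, which is $<10^{-7}$ precisely for $k\ge 35$. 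This is where the hypothesis $k\ge 35$ is genuinely used; your claim that it enters only to control characters modulo $3$ is incorrect (that would already hold for $k\ge 2$). In short, you must replace the pointwise bound on $\mathfrak{S}(n)$ by an average over residues modulo $3q$, and the required equidistribution modulo $3q$ is the real content of the condition $k\ge 35$.
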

\begin{proof} As shown in \cite{LiuLv}, for $p\equiv 1\pmod 4$,$$1+A(n,p)\ge 1-\frac{5p^2+10p+1}{(p-1)^4},$$while for $p\equiv 3\pmod 4$,
$$1+A(n,p)\ge 1-\frac{5p^2-2p+1}{(p-1)^4}.$$ We have the
numerical inequalities
$$\prod_{\substack{17\le p< p_{5000}\\ p\equiv 1\pmod 4}}(1-\frac{5p^2+10p+1}{(p-1)^4})
\prod_{\substack{17\le p< p_{5000}\\ p\equiv 3\pmod
4}}(1-\frac{5p^2-2p+1}{(p-1)^4})\ge 0.904923,$$where $p_r$ denotes
the $r$-th prime. Moreover
\begin{align*}\prod_{p\ge p_{5000}}(1+A(n,p))&\ \ge
\prod_{p\ge p_{5000}}(1-\frac{1}{(p-1)^2})^6 \\& \ \ge \prod_{m\ge
p_{5000}}(1-\frac{1}{(m-1)^2})^6 = (1-\frac{1}{p_{5000}-1})^6
\\ &\ >0.99994271.\end{align*}Thus
\begin{align*}&\prod_{p\ge 17}(1+A(n,p))\ge C_1:=0.904811.\end{align*}
Set $m_0=14$. Now we have
\begin{align*}\sum_{\substack{n\in \Xi(N,k)\\ n\equiv
4\pmod{24}}}\mathfrak{S}(n)\ \ge\ &24C_1\sum_{\substack{n\in
\Xi(N,k)\\ n\equiv 4\pmod{24}}}\prod_{3<p<m_0}(1+A(n,p))
\\=\ &24C_1\sum_{1\le
j\le q}\sum_{\substack{n\in \Xi(N,k)\\ n\equiv 4\pmod{24}\\
n\equiv j\pmod q}}\prod_{3<p<m_0}(1+A(n,p)) \\ =\ &
24C_1\sum_{1\le
j\le q}\prod_{3<p<m_0}(1+A(j,p))\sum_{\substack{n\in \Xi(N,k)\\ n\equiv 4\pmod{24}\\
n\equiv j\pmod q}}1,\end{align*}where $q=\prod_{3<p<m_0}p$.
Consider the inner sum, we have
\begin{align*}\mathcal{S}:=\sum_{\substack{n\in \Xi(N,k)\\ n\equiv 4\pmod{24}\\
n\equiv j\pmod
q}}1=&\big(\frac{L}{\varrho(3q)}+O(1)\big)^k\sum_{\substack{1\le
\nu_1,\ldots,\nu_k\le \varrho(3q)\\
2^{\nu_1}+\cdots+2^{\nu_k}\equiv a_j\pmod
{3q}}}1,\end{align*}where $a_j$ is the natural number in $[1,3q]$
satisfying $a_j\equiv 0\pmod 3$ and $a_j\equiv j\pmod q$. Note
that
\begin{align*}\mathcal{S}=&\ \big(\frac{L}{\varrho(3q)}+O(1)\big)^k\frac{1}{3q}\sum_{t=0}^{3q-1}e(\frac{ta_j}{3q})\big(\sum_{1\le
s\le \varrho(3q)}e(\frac{t2^s}{3q})\big)^k.\end{align*} We arrive
at\begin{align*}\mathcal{S}\ge&\
\big(\frac{L}{\varrho(3q)}+O(1)\big)^k\frac{1}{3q}\big(\varrho(3q)^k-(3q-1)
(\max)^k\big)
\\=&\
\frac{L^k}{3q}\big(1-(3q-1)
(\frac{\max}{\varrho(3q)})^k\big)+O(L^{k-1}),\end{align*}where
$$\max\ =\ \max\{|\sum_{1\le s\le
\varrho(3q)}e(\frac{j2^s}{3q})|:1\le j\le 3q-1\}.$$ Note that
$3q=15015, \varrho(3q)=60$. With the help of a computer, it is not
hard to check that
\begin{align*}\max\ =34.5\ldots <\ 34.6,\ \textrm{ and } (3q-1)(\frac{\max}{\varrho(3q)})^{35}<10^{-7}.\end{align*}Therefore
\begin{align*}\mathcal{S}\ge
\frac{(1-10^{-7})L^k}{3q}+O(L^{k-1}),\end{align*}and
\begin{align*}\sum_{\substack{n\in \Xi(N,k)\\ n\equiv 4\pmod{24}}}\mathfrak{S}(n)\ge &24C_1\sum_{j=1}^{
p}\prod_{3<p<m_0}(1+A(j,p))\frac{(1-10^{-7})L^k}{3q}+O(L^{k-1})
\\=&\frac{8C_1(1-10^{-7})L^k}{q}\prod_{3<p<m_0}\big(\sum_{j=1}^{
p}(1+A(j,p))\big)+O(L^{k-1}).\end{align*}Observing that
$$\sum_{j=1}^{
p}(1+A(j,p))=p+\frac{1}{(p-1)^4}\sum_{1\le a\le
p-1}C^4(p,a)\sum_{j=1}^{ p}e(-\frac{aj}{q})=p,$$ one has
$$\sum_{\substack{n\in \Xi(N,k)\\ n\equiv 4\pmod{24}}}\mathfrak{S}(n)\ \ge\, \ 8C_1(1-10^{-7})L^k+O(L^{k-1}).$$
The proof is completed since $L$ is sufficiently large.
\end{proof}

\section{Proof of Theorem \ref{thm}}
As in \cite{LiuLv}, it suffices to prove that large even integers
$N\equiv 4\pmod 8$ can be represented as the sum of four squares
of primes and $44$ powers of $2$, since for every even integer
$N$, there exist $u_1,u_2\in \{1,2,3\}$ such that
$N-2^{u_1}-2^{u_2}\equiv 4\pmod 8$. We set $k=44$. Let
$$E(\lambda)=\{\alpha\in (0,1]:\ |G(\alpha)|\ge \lambda L\}.$$
By Lemma 5.3 in \cite{LiuLv}, we know $$|E(0.887167)|\ll
N^{-\frac{3}{4}-10^{-10}}.$$ Let
$$\mathfrak{m}_1=C(\mathcal{M})\cap E(0.887167),$$ and
$$\mathfrak{m}_2=C(\mathcal{M})\setminus \mathfrak{m}_1.$$
Following the lines in \cite{LiuLv}, we have
\begin{align*}&\big|\int_{C(\mathcal{M})
}T^4(\alpha)G^k(\alpha)e(-\alpha N)d \alpha\big|\\ \le\ &
\int_{\mathfrak{m}_1}\big|T^4(\alpha)G^k(\alpha)\big|d
\alpha+\int_{\mathfrak{m}_2}\big|T^4(\alpha)G^k(\alpha)\big|d
\alpha\\ \le\ &
O(N^{1-\varepsilon})+(0.887167L)^{k-14}\int^1_0|T(\alpha)^4G(\alpha)^{14}|d
\alpha.
\end{align*}
By Lemma \ref{lemma33} and Lemma \ref{constant3},
\begin{align*}\big|\int_{C(\mathcal{M})
}\big|\le O(N)+(0.887167L)^{k-14}\ 3c_0\times
8(15+O(\eta)+\varepsilon)\mathfrak{J}(0)N L^{14}.
\end{align*}
On the major arcs, we have by Lemma \ref{l1},
$$\int_{\mathcal{M}}T^4(\alpha)G^k(\alpha)e(-\alpha N)d
\alpha=\sum_{\substack{ n\in \Xi(N,k) \\ n\equiv 8\pmod{24}}}
\mathfrak{S}(n)\mathfrak{I}(\frac{n}{N})N+O(NL^{k-1}).$$ For $n\in
\Xi(N,k)$, one has $\frac{n}{N}=1+O(L^{-1})$. Applying Lemma
\ref{l5.1}, we
get\begin{align*}&\int_{\mathcal{M}}T^4(\alpha)G^k(\alpha)e(-\alpha
N)d \alpha \\ = & \sum_{\substack{ n\in \Xi(N,k) \\ n\equiv
8\pmod{24}}}
\mathfrak{S}(n)\mathfrak{I}(1)(1+O(L^{-1}))N+O(NL^{k-1})\\ \ge&
0.9\times 8\,\mathfrak{I}(1)NL^k+O(NL^{k-1}).\end{align*}
Therefore we have
\begin{eqnarray*}R_{44}(N)&\ge&
8NL^{44}\big(0.9\mathfrak{I}(1)-(0.887167)^{30}
(45+\varepsilon)c_0(1+O(\eta))\mathfrak{J}(0)\big) \\ &>&0.00001
\mathfrak{J}(0)NL^{44}\end{eqnarray*}provided that $\eta$ is
sufficiently small. The proof of Theorem \ref{thm} is completed.


\end{document}